\documentclass[12pt]{amsart}
\usepackage{epsfig}
\usepackage{amssymb, amsmath}
\usepackage{amsfonts,graphics,epsfig}
\usepackage{mathtools}
\usepackage{mathrsfs}
\usepackage{pb-diagram, pb-xy}
\usepackage[all]{xy}
\usepackage{comment}
\usepackage{color}
\usepackage{tikz-cd}
\usepackage{setspace}
\usepackage[left=1.5in,right=1.5in,top=1.5in,bottom=1.5in]{geometry}
\usepackage[colorlinks=true, linkcolor=red, urlcolor=blue, citecolor=blue]{hyperref}

\newcommand{\mbR}{\mathbb{R}}

\newcommand{\mbZ}{\mathbb{Z}}
\newcommand{\mbQ}{\mathbb{Q}}

\def\mbN{\mathbb{N}}
\def\mbP{\mathbb{P}}

\def\mfD{\mathfrak{D}}
\newcommand{\<}{\leq}
\def\>{\geq}

\def\ve{\varepsilon}

\def\subset{\subseteq}

\def\implies{\Rightarrow}

\newcommand{\lrd}{\lfloor}
\newcommand{\rrd}{\rfloor}

\newcommand{\num}{\equiv}

\newcommand{\bir}{\dashrightarrow}
\def\mcO{\mathcal{O}}

\def\msA{\mathscr{A}}

\def\mfI{\mathfrak{T}}

\newtheorem{theorem}{Theorem}[section]
\newtheorem{lemma}[theorem]{Lemma}
\newtheorem{proposition}[theorem]{Proposition}
\newtheorem{corollary}[theorem]{Corollary}
\newtheorem{conjecture}[theorem]{Conjecture}
\newtheorem{claim}[theorem]{Claim}

\theoremstyle{remark}
\newtheorem{remark}[theorem]{Remark}
\theoremstyle{definition}
\newtheorem{definition}[theorem]{Definition}
\theoremstyle{definition}

\numberwithin{equation}{section}

\def\deg{\operatorname{deg}}

\def\Supp{\operatorname{Supp}}
\def\dim{\operatorname{dim}}

\def\chr{\operatorname{char}}

\def\lct{\operatorname{lct}}
\def\LCT{\operatorname{LCT}}

\def\min{\operatorname{min}}

\def\mult{\operatorname{mult}}

\def\vol{\operatorname{vol}}
\def\Span{\operatorname{Span}}

\author{Omprokash Das}
\address{Department of Mathematics\\
University of California, Los Angeles\\
520 Portola Plaza\\
Math Sciences Building 6363.}
\email{omprokash@gmail.com, das@math.ucla.edu}


\date{}

\begin{document}
\title[On the Weak--BAB Conjecture for $3$-folds in char $p>5$]{On the boundedness of anti-canonical volumes of singular Fano $3$-folds in characteristic $p>5$}
\begin{abstract}
	In this article we prove the following version of the Weak-BAB conjecture for $3$-folds in $\chr p>5$: Fix a DCC set $I\subset [0, 1)$ and an algebraically closed field $k$ of characteristic $p>5$. Let $\mfD$ be a collection of klt pairs $(X, \Delta)$ satisfying the following properties: (1) $X$ is a projective $3$-fold, (2) $\Delta$ is an $\mbR$-divisor with coefficients in $I$, (3) $K_X+\Delta\num 0$, and (4) $-K_X$ is ample. Then the set $\{\vol_X(-K_X) \ | \ (X, \Delta)\in\mfD\mbox{ for some }\Delta\}$ is bounded from above.
\end{abstract}

\maketitle

\tableofcontents

\section{Introduction}
Given a smooth projective variety $X$, the minimal model program predicts that $X$ is birational to a variety $Y$ with canonical singularities such that either $K_Y$ is ample, or $Y$ admits a fibration whose general fibers are Calabi-Yau varieties or Fano varieties. In other words, one may say that, birationally, every variety is in some sense constructed from varieties $X$ with good singularities such that either $K_X$ is ample or $K_X\num 0$ or $-K_X$ is ample. So it is quite natural to study such special varieties with the hope of obtaining some sort of classification theory. One such classification is finding the \emph{moduli space} of a given class of varieties. In this article we are interested in the moduli problem of Fano varieties i.e., $-K_X$ is ample, of dimension $3$ in positive characteristic. The first problem in this direction is proving the boundedness of the \emph{moduli functor}, i.e., finding a fibration $f:\mathcal{X}\to T$ such that every Fano $3$-fold appears as closed fiber of $f$. However, this turns out to be a too general of a question to be true without some restriction on the singularities of $X$; counterexamples are known to exists even in characteristic $0$. This leads to the following conjecture of Borisov, Alexeev and Borisov, known as the BAB conjecture.
\begin{conjecture}[BAB Conjecture]\label{con:bab-conjecture}
	Fix a positive integer $n$ and a real number $\ve>0$. The the set of all projective varieties $X$ satisfying the following properties: 
	\begin{enumerate}
		\item the dimension of $X$ is $n$,
		\item $(X, \Delta)$ has $\ve$-log canonical singularities for some boundary $\mbR$-divisor $\Delta$,
		\item $-(K_X+\Delta)$ is ample,
	\end{enumerate}
	form a bounded family.
\end{conjecture}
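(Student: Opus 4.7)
The plan is to follow Birkar's strategy from the characteristic-zero proof of BAB. The reduction splits into three steps: (a) a uniform upper bound on $\vol_X(-(K_X+\Delta))$, (b) \emph{effective birationality} -- producing $m = m(n,\varepsilon)$ such that $|-m(K_X+\Delta)|$ defines a birational map for every pair in the family, and (c) a uniform very-ample multiple via Koll\'ar's effective Matsusaka. Once (a)--(c) are in place, all such $X$ embed in a fixed $\mathbb{P}^N$ with bounded Hilbert polynomial, and boundedness follows from a standard Hilbert-scheme argument.

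First I would reduce to the case $\Delta = 0$ with $X$ being $\mathbb{Q}$-factorial and $\varepsilon$-klt, by running an MMP on $-K_X$ and applying Shokurov-type extraction to dispose of the boundary. Next I would invoke the theory of complements to produce a uniform $N = N(n,\varepsilon)$ and an effective $\Gamma \ge \Delta$ with $N(K_X + \Gamma) \sim 0$ and $(X,\Gamma)$ log canonical; this forces the $\Gamma$-coefficients into a fixed finite set and recasts the problem as boundedness of log Calabi--Yau pairs of bounded index. Within that setting, the canonical bundle formula and effective adjunction of Hacon--McKernan--Xu type reduce boundedness to step (a).

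For (a) I would use the $\varepsilon$-klt hypothesis to derive an alpha-invariant-type lower bound $\lct(X;|-K_X|_{\mathbb{R}}) \ge \delta(n,\varepsilon) > 0$, then create non-klt centers by cutting with suitable members of $|-mK_X|$; adjunction to the minimal non-klt center produces a lower-dimensional Fano-type pair to which induction on $n$ applies. For (b), once (a) is in place, I would create isolated non-klt centers at two general points and lift sections via a Koll\'ar-injectivity/extension argument to deduce that $|-m(K_X+\Delta)|$ separates the two points for uniform $m$, yielding birationality.

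The hard part will be positive characteristic: Kodaira and Kawamata--Viehweg vanishing fail, the klt MMP is available only in dimension $\le 3$ (Hacon--Xu, Birkar--Waldron) and only for $p > 5$, Bertini for non-klt centers is delicate because of inseparability, and Shokurov's complements theory is not yet fully developed in $\chr p > 0$. Accordingly, within the tools currently at our disposal the realistic target is step (a) for $3$-folds in $\chr p > 5$ -- the main theorem of this paper -- where one substitutes Keel's semiampleness and $F$-singularity adjunction for vanishing, and exploits the DCC condition on $I$ to control coefficient jumps along non-klt centers. Steps (b)--(c) in positive characteristic, and the full conjecture in dimension $\ge 4$, will require genuinely new inputs such as an $F$-theoretic analogue of Koll\'ar injectivity and a positive-characteristic complements theorem.
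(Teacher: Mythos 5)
You are being asked to prove a statement that is, in this paper, only a \emph{conjecture}: the BAB conjecture is stated in the introduction for motivation, and the paper offers no proof of it. What the paper actually proves (Theorem \ref{thm:main-theorem}) is a much weaker statement: an upper bound on $\vol_X(-K_X)$ for klt $3$-folds over an algebraically closed field of characteristic $p>5$ with coefficients of $\Delta$ in a DCC set, $K_X+\Delta\num 0$ and $-K_X$ big. Note that even this is not the Weak-BAB conjecture as stated with the $\ve$-log canonical hypothesis; the DCC/numerically-trivial hypotheses are genuinely different, and the $\ve$-lc version remains open in characteristic $p$. Your proposal is therefore not a proof of the statement but a program, and you in effect concede this in your last paragraph: steps (b) and (c) rely on Koll\'ar injectivity/vanishing-type section lifting, a theory of complements, and effective Matsusaka, none of which are currently available in positive characteristic, while the reduction to $\Delta=0$ and the induction on dimension require an MMP that in characteristic $p$ exists only for $3$-folds and $p>5$. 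Each of these is an essential missing ingredient, not a routine verification, so the argument cannot be accepted as a proof of the conjecture.

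It is also worth pointing out that even for the step you call (a), your sketch (alpha-invariant lower bound, cutting by members of $|-mK_X|$, adjunction to a minimal non-klt center and induction) is not how the paper proceeds, and in characteristic $p$ the lifting-of-sections input it implicitly needs is again unavailable. The paper's route is an ACC/DCC argument in the style of Hacon--M\textsuperscript{c}Kernan--Xu: one fixes $\ve$ via ACC for log canonical thresholds in dimensions $1$--$3$ (Birkar, plus the results over arbitrary, possibly imperfect, fields proved in Sections 5--6), shows that any $\Phi\num -K_X$ with $\Phi\>(1-\delta)\Delta$ must remain klt (Theorem \ref{thm:main-technical-result}, via dlt models, a Mori fiber space from \cite{BW17}, and restriction to the \emph{generic} fiber to handle the failure of Bertini and generic smoothness), and then derives a contradiction from Lemma \ref{lem:klt-perturbation} if $\vol_X(\ve\Delta)>3^3$, since a divisor of multiplicity $>3$ at a smooth point destroys kltness. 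If you want your write-up to match what can actually be proved today, you should restate the target as the DCC version of the volume bound and develop the ACC-over-imperfect-fields machinery; the full BAB statement in characteristic $p$ must be left as a conjecture.
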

Note that a pair $(X, \Delta)$ is said to have $\ve$-log canonical singularities if the discrepancies satisfy $a(E; X, \Delta)\>-1+\ve$ for all divisors $E$ over $X$. A necessary condition that follows from the BAB conjecture is that the volumes of $-K_X$ (see Definition \ref{def:volumes}) is bounded from above; this is known as the Weak-BAB conjecture.
\begin{conjecture}[Weak-BAB Conjecture]\label{con:weak-bab-conjecture}
	Fix a positive integer $n$ and a real number $\ve>0$. Let $\mfD$ be the set of all log pairs $(X, \Delta)$ satisfying the following properties: 
	\begin{enumerate}
		\item $X$ is a projective variety of dimension $n$,
		\item $(X, \Delta)$ has $\ve$-log canonical singularities for some boundary $\mbR$-divisor $\Delta$, and
		\item $-(K_X+\Delta)$ is ample.
	\end{enumerate}
	Then there exists a positive real number $M(n, \ve)$ depending only on $n$ and $\ve>0$ such that the set $\{\vol_X(-K_X)\; | \; (X, \Delta)\in\mfD \mbox{ for some }\Delta \}$ is bounded from above by $M(n ,\ve)$.
\end{conjecture}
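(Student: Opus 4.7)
The plan is to reduce Conjecture \ref{con:weak-bab-conjecture} to the boundedness of $\vol(-K_X)$ for klt log Calabi--Yau pairs with DCC coefficients, mirroring Jiang's reduction in characteristic zero. Since $-(K_X+\Delta)$ is ample and $(X,\Delta)$ is $\ve$-lc, I would first invoke an effective base-point-freeness theorem to produce $m = m(n,\ve) \geq 2/\ve$ such that $|-m(K_X+\Delta)|$ is base-point-free. Picking a general $D \in |-m(K_X+\Delta)|$ and setting $\Delta' := \Delta + \tfrac{1}{m}D$, a Bertini-type argument gives that $(X,\Delta')$ is $\tfrac{\ve}{2}$-lc, $K_X+\Delta' \equiv 0$, and the coefficients of $\Delta'$ lie in a DCC (in fact finite) set $I = I(n,\ve) \subset [0,1)$ depending only on $n$ and $\ve$.

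Next, I would apply a boundedness statement for $\vol_X(-K_X)$ on the resulting family of klt pairs $(X, \Delta')$ with $K_X+\Delta' \equiv 0$ and coefficients of $\Delta'$ in $I$. Since $\vol_X(-K_X)$ depends only on $X$, any such bound directly yields the uniform $M(n,\ve)$ sought in the conjecture. In the range $n=3$, $\chr p > 5$, this log Calabi--Yau volume boundedness is precisely the main result of this paper; for higher $n$ or fully general $(X,\Delta')$ one would need a corresponding boundedness of $\vol(-K_X)$ along the numerically trivial locus, itself a substantial open problem.

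The main obstacle is the effective base-point-freeness input in positive characteristic. In characteristic zero, Koll\'ar's theorem and its refinements furnish the uniform $m(n,\ve)$ and close the reduction cleanly. In $\chr p$, analogous statements are known only for surfaces and, with restrictions, for $3$-folds in $\chr p > 5$. A secondary subtlety is that Bertini-type preservation of $\ve$-lc singularities in positive characteristic must be argued carefully because of the failure of standard Kodaira-type vanishing: one must ensure that a general member of $|-m(K_X+\Delta)|$ neither drops the log discrepancy by more than $1/m$ along divisors over $X$ nor contributes new log-canonical centres. For larger $n$ or smaller $p$ the plan breaks at the first step, and essentially new input---most plausibly a theory of bounded complements in positive characteristic---would be required to push the reduction through.
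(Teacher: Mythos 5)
You are attempting to prove a statement that the paper itself records only as a conjecture: the paper does not prove Conjecture \ref{con:weak-bab-conjecture}, it proves the different (DCC-coefficient, log Calabi--Yau) statement Theorem \ref{thm:main-theorem} in dimension $3$ and characteristic $p>5$. Your proposal is a conditional reduction to that theorem, and besides the input you already flag as missing, it contains a concrete error. The $\ve$-lc hypothesis only forces the coefficients of $\Delta$ to lie in $[0,1-\ve]$, and this interval is \emph{not} a DCC set (it contains infinite strictly decreasing sequences). Consequently $\Delta'=\Delta+\tfrac{1}{m}D$ does \emph{not} have coefficients in a finite, or even DCC, set $I(n,\ve)$: its coefficients are the original arbitrary real coefficients of $\Delta$ together with $1/m$. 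Since Theorem \ref{thm:main-theorem} is stated for a \emph{fixed} DCC set $I\subset[0,1)$, the pairs $(X,\Delta')$ you produce are simply not in its scope. Repairing this would require first replacing $\Delta$ by a boundary with coefficients in a fixed finite set while keeping the log canonical divisor numerically trivial and the pair klt --- a boundedness-of-complements type statement, which is unavailable in positive characteristic (and in characteristic $0$ is of comparable depth to BAB itself), so this is a missing idea, not a routine fix.

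Even granting an effective $m(n,\ve)$ with $|-m(K_X+\Delta)|$ base-point free --- itself open in characteristic $p$, as you note --- the ``Bertini-type argument'' is exactly what the paper emphasizes fails there: a general member of a base-point-free linear system need not be reduced or have controlled singularities, so $(X,\Delta+\tfrac{1}{m}D)$ being $\tfrac{\ve}{2}$-lc, or even log canonical, does not follow, and new log canonical centres may appear. Finally, the conjecture is asserted for every dimension $n$ and every characteristic, while the target theorem covers only $3$-folds over an algebraically closed field of characteristic $p>5$; so even with both missing inputs supplied, your argument would yield at best a conditional statement in that restricted range rather than the conjecture. As written, the proposal is a reduction sketch with two open ingredients plus the coefficient error above, not a proof.
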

We note that the BAB conjecture (and hence the Weak-BAB conjecture) is known in dimension $2$ in \emph{arbitrary characteristic} due to Alexeev, \cite{Ale94}.
In dimension $3$ the following partial results on the BAB conjecture were known for a while:  The toric Fano $3$-fold case was proven by Borisov brothers in \cite{BB92}. In \cite{KMM92}, the authors proved the conejcture for smooth Fano varieties in char $0$; Kawamata in \cite{Kaw92} proved the conjecture in char $0$ for $\mbQ$-factorial terminal Fano $3$-folds of Picard number $\rho(X)=1$. In \cite{KMMT00}, the authors proved the same conjecture for $\mbQ$-factorial Fano $3$-folds with canonical singularities in char $0$.\\ 
The Weak-BAB conjecture is known in dimension $3$ in char $0$ when the Picard number $\rho(X)=1$, due to \cite{Lai16}, and the general case due to Jiang, \cite{Jia14nov}. More recently the BAB conjecture in full generality (and hence the Weak-BAB conjecture) is completely proved in char $0$ in every dimension in a series of breakthrough papers by Birkar, \cite{Bir16mar, Bir16sep}. On the other hand, very little is known about either of these two conjectures in positive characteristic in dimension $3$ or higher. In this article we prove a special case of the Weak-BAB conjecture for $3$-folds in characteristic $p>5$. More specifically, using the ideas from \cite{HMX14} we prove the following results:
\begin{theorem}\label{thm:main-theorem}
	Fix an algebraically closed field $k=\overline{k}$ of characteristic $p>5$ and a DCC set $I\subset [0, 1)$. Let $\mfD$ be the set of all klt pairs $(X, \Delta)$, where
	\begin{enumerate}
	\item $X$ is a projective $3$-fold,
	\item the coefficients of $\Delta$ belong to $I$,
	\item $K_X+\Delta\num 0$, and 
	\item $-K_X$ is big, 
\end{enumerate}
then the set
	\[
		\{\vol_X(-K_X)\ |\ (X, \Delta)\in\mfD \mbox{ for some }\Delta\}
	\]
	is bounded from above.\\
\end{theorem}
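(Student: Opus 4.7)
The plan is to argue by contradiction, following the strategy of Hacon-McKernan-Xu \cite{HMX14} adapted to positive characteristic. Suppose there is a sequence $(X_i,\Delta_i)\in\mfD$ with $\vol_{X_i}(-K_{X_i})\to\infty$. Since $-K_{X_i}$ is big and $K_{X_i}+\Delta_i\num 0$, writing $-K_{X_i}\sim_\bR A_i+E_i$ with $A_i$ ample and perturbing to $(X_i,(1-\varepsilon)\Delta_i+\varepsilon E_i)$ exhibits each $X_i$ as a variety of Fano type. By the MMP for $\bQ$-factorial klt $3$-folds in characteristic $p>5$ (Hacon-Xu, Birkar, Das-Waldron) I would first pass to a small $\bQ$-factorialization and then run a $K_{X_i}$-MMP, which terminates in a Mori fiber space $f_i\colon X_i'\to Z_i$. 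Since each divisorial contraction in a $K$-negative MMP has exceptional divisor of strictly positive discrepancy (so the pullback of $-K_Y$ equals $-K_X$ plus an effective divisor) and flips preserve the volumes of Weil divisors, one has $\vol(-K_{X_i'})\geq\vol(-K_{X_i})$; hence it suffices to bound anti-canonical volumes in the Mori fiber setting.

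Next I would split into cases by $\dim Z_i$. When $\dim Z_i\geq 1$, the general fibers of $f_i$ are klt Fano varieties of dimension $\leq 2$ and Picard rank one, so Alexeev's $2$-dimensional BAB controls the fiberwise volume; combined with a positive-characteristic canonical bundle formula producing on $Z_i$ a klt pair whose discriminant coefficients lie in a DCC set depending only on $I$, this yields an upper bound on $\vol(-K_{X_i'})$. This reduces matters to the case $\dim Z_i=0$, in which $X_i'$ is a $\bQ$-factorial klt Fano $3$-fold of Picard rank one with $-K_{X_i'}$ ample.

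In this Picard rank one case, which is the heart of the proof, the plan is to combine DCC of the coefficients of $\Delta_i'$ with ACC for log canonical thresholds on klt $3$-folds in characteristic $p>5$ to produce a uniform $N=N(I)$ such that some $D_i\in|-NK_{X_i'}|$ makes $(X_i',\tfrac{1}{N}D_i)$ log canonical, and then to extract the desired upper bound through a Riemann-Roch and effective non-vanishing argument applied to $-mK_{X_i'}$ for small $m$. The main obstacle throughout will be the failure of Kawamata-Viehweg vanishing in positive characteristic; the plan is to circumvent it by invoking the Kodaira-type vanishing theorems recently established for klt Fano $3$-folds in characteristic $p>5$ (Hacon-Witaszek, Bernasconi, Kawakami-Witaszek), supplemented where necessary by $F$-singularity techniques, which is precisely the role played by the hypothesis $p>5$.
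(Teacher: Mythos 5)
Your overall architecture is not the one the paper uses, and as written it has real gaps. The paper never runs a $K_X$-MMP on $X$ itself to reduce to bounding volumes on Mori fiber spaces; instead it proves a perturbation statement (Theorem \ref{thm:main-technical-result}): there is a uniform $\ve=\ve(I)$ such that any effective $\Phi\ge(1-\delta)\Delta$ with $K_X+\Phi\equiv 0$ and $0<\delta<\ve$ is again klt, and then concludes by the elementary ``$\vol>n^n$ creates a divisor of multiplicity $>n$ through a smooth point'' trick (Lemmas \ref{lem:creating-singular-divisors} and \ref{lem:klt-perturbation}), giving the explicit bound $27/\ve^3$. The Mori fiber space only appears inside the proof of that perturbation statement, after passing to a dlt model, and the contradiction there comes from ACC-type statements for numerically trivial pairs in dimensions $1$ and $2$ over \emph{arbitrary (imperfect) fields} applied to the \emph{generic} fiber.

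The concrete gaps in your plan are the following. First, in the case $\dim Z_i\ge 1$ you assert that the general fibers of the Mori fiber space are klt Fano varieties of dimension $\le 2$; in characteristic $p$ this is precisely what fails: general fibers can be non-normal (even non-reduced), and even when normal, the failure of generic smoothness means klt-ness of $(X_i',\Delta_i')$ does not restrict to general fibers. This is exactly why the paper works with the generic fiber over $K(Z_i)$ and develops the dimension $1$ and $2$ ACC results over arbitrary fields (Sections \ref{sec:acc-for-curves} and \ref{sec:acc-for-surfaces-and-3-folds}); Alexeev's BAB is also only available over algebraically closed fields, so you cannot simply quote it fiberwise. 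You additionally invoke a positive-characteristic canonical bundle formula with discriminant coefficients in a DCC set depending only on $I$; no such statement is available in characteristic $p>5$ at the generality you need, and even in characteristic $0$ fiberwise boundedness plus a canonical bundle formula does not by itself bound $\vol(-K_{X_i'})$ (one needs a genuine comparison of positivity in fiber and base directions, as in Jiang's work). Second, in the Picard rank one case the step ``ACC for lct produces a uniform $N$ with some $D_i\in|-NK_{X_i'}|$ making $(X_i',\tfrac1N D_i)$ log canonical, then Riemann--Roch and effective non-vanishing give the bound'' is a list of tools rather than an argument: uniform effective non-vanishing or effective birationality for $-NK$ is not known in characteristic $p$, the vanishing theorems you cite postdate this paper and would not by themselves produce the uniform $N$ or convert log canonicity of one member of $|-NK|$ into a volume bound, and nowhere in this case do you actually use the hypotheses $K_X+\Delta\equiv 0$ and coefficients in $I$, which are what make the uniform bound possible. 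Those hypotheses must enter through a global ACC/perturbation mechanism, as in the paper's Theorem \ref{thm:main-technical-result}.
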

\begin{remark}
The theorem above is a positive characteristic ($p>5$) analog of Theorem B in \cite{HMX14} in dimension $3$. It is also worthwhile to note that the statement above is slightly more general ($-K_X$ is big) than the usual BAB conjecture (which assumes that $-K_X$ is ample).\\  
\end{remark}

We note that, as far as we know, except \cite{Zhu17nov}, our result is the only result on the Weak-BAB conjecture for $3$-folds in positive characteristic. About the paper \cite{Zhu17nov}, in this article the author proves a boundedness result for the anti-canonical volumes of Fano $3$-folds in $\chr p>5$ under the restrictive assumption that the \emph{Seshadri constant} of $-K_X$ is larger than some fixed number. We note that this kind of restriction is \emph{not standard} in the context of moduli problems. Seshadri constants typically do not play an important role in the theory of moduli spaces and do not seem to appear naturally in the context of the minimal model program. On the other hand, our hypothesis involving DCC sets is a standard hypothesis which appears quite naturally and frequently in various statements related to the minimal model program and the moduli problem in general, for example, see \cite[Corollary 1.7]{HMX14} for a boundedness result of Fano varieties (the BAB conjecture) in characteristic $0$ involving DCC sets. In this sense our result is an important first step towards the proof of the boundedness of Fano $3$-folds in characteristic $p>5$.\\ 
 
\textbf{Idea of the Proof of Theorem} \ref{thm:main-theorem}: The intuitive idea of the proof is the following: Since the coefficients of $\Delta$ are contained in a fixed DCC set $I$, we can find an $\ve>0$ depending only on the set $I$ and satisfying the following properties: if $(X, \Delta)\in\mfD$ and $\Phi\>0$ is an effective $\mbR$-Cartier divisor on $X$ such that $K_X+\Phi\num 0$ and $\Phi$ is contained in the ``$\ve$-neighborhood'' of $\Delta$, then $(X, \Phi)$ has klt singularities. Now if the $\vol_X(-K_X)$ is not bounded above as $X$ varies in $\mfD$, then choose a pair $(X, \Delta)\in\mfD$ such that $\vol_X(\Delta)=\vol_X(-K_X)>n^n$, where $n=\dim X$. Then we can construct an effective $\mbR$-divisor $\Psi\sim_\mbR\Delta$ contained in the ``$\ve$-neighborhood'' of $\Delta$ such that $(X, \Psi)$ is not klt, which is a contradiction.\\

{\bf Acknowledgement.} I would like to thank Joe Waldron for answering several of my questions. I would also like to thank Burt Totaro for his valuable comments.
 My sincerest gratitude goes to Christopher Hacon for answering many questions and fruitful discussions. I would also like to thank Akash Sengupta for pointing out an error in the previous version.\\    

\section{Preliminaries}
Throughout the article by an \textbf{\emph{arbitrary field $k$}}, we mean that the characteristic of $k$ is either $0$ or positive and $k$ is possibly \emph{imperfect}; in particular $k$ is not necessarily \emph{algebraically closed}. 
 
\begin{definition}\label{def:varieties}
	Let $k$ be an arbitrary field. A \emph{variety} $X$ over $k$ is an integral separated scheme of finite type over $k$. A \emph{curve} over $k$ is a variety of dimension $1$ over $k$. A \emph{surface} over $k$ is a variety of dimension $2$ over $k$
\end{definition}

\begin{definition}\label{def:mmp-singularities}
	Let $k$ be an arbitrary field and $X$ a \emph{normal} variety over $k$. Let $\Delta$ be an $\mbR$-divisor on $X$. We say that $\Delta$ is a \emph{boundary divisor} if the coefficients of $\Delta$ belong to the closed interval $[0, 1]$. If $\Delta$ is a boundary divisor, then a pair $(X, \Delta)$ is called a \emph{log pair} if  $K_X+\Delta$ is \emph{$\mbR$-Cartier}. For a log pair $(X, \Delta)$, we define \emph{terminal, canonical, klt, plt, dlt and log canonical or lc} singularities as in \cite[Definition 2.8]{Kol13}. We emphasize that in this article whenever we talk about singularities of a pair $(X, \Delta)$ we always assume that $\Delta$ is an \emph{effective} $\mbR$-divisor.  
\end{definition}

\begin{remark}\label{rmk:dim-1-singularities}
	Let $C$ be a normal curve over an arbitrary field $k$ and $\Delta$ is a boundary divisor on $C$. Then $(C, \Delta)$ is log canonical (resp. klt) if and only if the coefficients of $\Delta$ are less than or equal to $1$ (resp. strictly less than $1$).\\
\end{remark}

\begin{remark}
	We note that the Nakai-Moishezon-Klemian criterion for ampleness, Kodaira's lemma for big divisors, etc. hold on projective varieties defined over arbitrary fields. For more details see \cite[Remark 2.3 and 2.4]{Tan18}.  
\end{remark}

\begin{remark}
	For two $\mbR$-divisors $D$ and $D'$, by $D>D'$ we mean that $D\>D'$ and $D\neq D'$.
\end{remark}

\begin{definition}\label{def:degree-on-curve}
Let $C$ be a normal, i.e., regular curve over an arbitrary field $k$. Let $D=\sum r_iP_i$ be an Weil $\mbR$-divisor on $C$. Then the degree of $D$ on $C$ over $k$ is defined as
\[
	\deg_k(D):=\sum_i [k(P_i):k]\cdot r_i,
\]
where $k(P_i)$ is the residue field of the \emph{closed} point $P_i\in C$ and $[k(P_i):k]$ is the extension degree of the fields.\	
\end{definition}

\subsection{DCC sets and adjunction}
\begin{definition}\label{def:dcc-sets}
	We say that a subset $I$ of real numbers satisfies the \emph{descending chain condition} or DCC if for every decreasing sequence $\{a_i\}\subset I$, i.e., $a_i\>a_{i+1}$ for all $i\>1$, there exists a $N\gg 0$ such that $a_i=a_{i+1}$ for all $i\>N$; equivalently, $I$ does not contain any infinite strictly decreasing sequence.\\
	
	Let $I\subset [0, 1]$. We define
	\[
		I_+:=\{0\}\cup\left\{j\in[0,1]\; \middle|\; j=\sum_{p=1}^\ell i_p, \mbox{ for some } i_1, i_2,\ldots, i_\ell\in I \right\}
	\] 
	and
	\[
		D(I):=\left\{a\<1\;\middle|\; a=\frac{m-1+f}{m}, m\in\mbN, f\in I_+\right\}.
	\]
\end{definition}

The following lemma shows some useful properties of DCC sets.
\begin{lemma}\label{lem:dcc-sets}
Let $I, I_1, I_2,\ldots, I_n$ be subsets of $\mbR_{\>0}$.
\begin{enumerate}
	\item Any subset of a DCC set is a DCC set.
	\item If $I_1, I_2,\ldots, I_n$ all satisfy the DCC, then $\cup_{i=1}^n I_n$ satisfies the DCC.
	\item If $I$ satisfies the DCC and $r_1, r_2,\ldots, r_k\>0$, then $I'=\{ar_i:a\in I, 1\<i\<k\}$ satisfies the DCC.
	\item If $I_1, I_2,\ldots, I_n$ are all DCC sets, then $\sum_{i=1}^n I_i:=\{\sum_{i=1}^n r_i: r_i\in I_i\}$ is a DCC set.
	\item Fix a positive integer $N>0$. If $I$ is a DCC set, then $\Span_N(I):=\left\{\sum_{i=1}^Nn_ir_i: n_i\in\mbZ_{\>0}, r_i\in I\mbox{ for all } i\right\}$ is a DCC set.
	\item Let $I\subset [0, 1]$, then $I$ satisfies the DCC if and only if $D(I)$ satisfies the DCC.
\end{enumerate}	
\end{lemma}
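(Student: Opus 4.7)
Parts (1) and (2) are immediate from the definition: a strictly decreasing sequence in a subset is a strictly decreasing sequence in the ambient set, and by pigeonhole any sequence in a finite union $\bigcup I_j$ has an infinite subsequence lying entirely in some single $I_j$, so it cannot be strictly decreasing. For (3) I would write $I' = \bigcup_{i=1}^k r_i I$ and note that scaling a DCC set by a positive constant trivially preserves DCC, then apply (2).

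For parts (4) and (5) the plan rests on the standard fact that every sequence in a DCC subset of $\mbR_{\>0}$ admits an infinite non-decreasing subsequence (a short minimality argument: otherwise iteratively extract a smaller term to manufacture an infinite strictly decreasing subsequence). For (4), I would induct on $n$, reducing to $n=2$: if $c_j = a_j + b_j$ were strictly decreasing, extract a non-decreasing subsequence of $(a_j)$; then $(b_j)$ would be strictly decreasing along this subsequence, contradicting the DCC of $I_2$. For (5), suppose $s_j = \sum_{i=1}^N n_{i,j} r_{i,j}$ is strictly decreasing. Setting $\mu := \min(I\setminus\{0\})$ (the case $I \subseteq \{0\}$ is trivial), boundedness of $s_j$ above by $s_1$ forces $n_{i,j} \< s_1/\mu$ whenever $r_{i,j} \ne 0$, so the coefficient tuples $(n_{1,j},\ldots,n_{N,j})$ range over a finite set; pass to a subsequence on which they equal a fixed tuple $(n_1,\ldots,n_N)$. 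Successively extracting non-decreasing subsequences in each slot $(r_{i,j})_j$ as in (4) makes $s_j = \sum_i n_i r_{i,j}$ non-decreasing, a contradiction.

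The main content is (6), and I expect the forward direction to be the principal obstacle. The first step is to check that $I_+$ is DCC: the positive minimum $\mu$ of $I\setminus\{0\}$ forces any sum $\sum i_p$ of elements of $I$ lying in $[0,1]$ to have at most $N := \lfloor 1/\mu\rfloor$ nonzero summands, so $I_+ \subseteq \Span_N(I)$, which is DCC by (5); hence $I_+$ is DCC by (1). Now rewrite $a = (m-1+f)/m = 1 - (1-f)/m$, so a strictly decreasing sequence $a_j = (m_j-1+f_j)/m_j$ in $D(I)$ corresponds to a strictly increasing sequence $b_j := (1-f_j)/m_j$ in $[0,1]$. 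After passing to a subsequence I may assume either $m_j$ is bounded (hence eventually constant equal to some $m$ along a further subsequence, so $f_j$ is strictly decreasing in $I_+$, contradicting DCC of $I_+$) or $m_j \to \infty$ (so $0 \< b_j \< 1/m_j \to 0$, incompatible with $b_j$ being strictly increasing and non-negative). The reverse direction is immediate: for each $i \in I$, taking $m=1$ and $f = i \in I_+$ shows $I \subseteq D(I)$, and (1) concludes.
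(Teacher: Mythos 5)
Your argument is correct, and for parts (1)--(4) it follows essentially the paper's route: contradiction plus extraction of monotone subsequences, with your induction down to $n=2$ in (4) being a cosmetic variant of the paper's simultaneous passage to increasing subsequences in all coordinates (your explicit justification that a DCC set admits non-decreasing subsequences, via minima of tails, is a point the paper leaves implicit). The genuine differences are in (5) and (6). For (5) the paper writes $\Span_N(I)$ as the $N$-fold sum of $\mbZ_{\ge 0}\cdot I$, proves $\mbZ_{\ge 0}\cdot I$ is DCC, and invokes (4); you instead bound the integer coefficients directly by $s_1/\mu$ with $\mu=\min\left(I\setminus\{0\}\right)$ and pass to a fixed coefficient tuple --- both work, and yours makes explicit why the multipliers land in a finite set, provided you add the one-line normalization that any $n_{i,j}$ attached to $r_{i,j}=0$ may be replaced by $0$ (and treat $I\subseteq\{0\}$ separately, as you do). For (6) the paper gives no argument at all, citing \cite[Lemma 4.4]{MP04}, whereas you supply a complete self-contained proof: $I_+\subseteq\Span_N(I)$ with $N=\lfloor 1/\mu\rfloor$ yields DCC of $I_+$ via (5) and (1), the dichotomy ``$m_j$ bounded versus $m_j\to\infty$'' applied to $a_j=1-(1-f_j)/m_j$ settles the forward direction for $D(I)$, and the converse via $m=1$, $f=i$ is exactly right. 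This self-containment is a real gain over the paper's citation; the only touch-up needed is to record the trivial case $I\setminus\{0\}=\emptyset$ (there $I_+=\{0\}$ and $D(I)=\{(m-1)/m : m\in\mbN\}$, which satisfies the DCC since it is increasing), so that $\mu$ is always defined.
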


\begin{proof}
	Part (1) is obvious. For part (2), by contradiction assume that there is a strictly decreasing sequence $\{t_m\}$ contained in $\cup_{i=1}^nI_i$. Then by passing to an infinite subsequence we may assume that all the terms of $\{t_m\}$ are contained in $I_j$ for some fixed $j$ satisfying $1\<j\<n$. This is a contradiction to the DCC property of $I_j$. Part (3) follows from part (1) and (2) by noticing that $I'=\cup_{i=1}^kJ_i$, where $J_i=\{ar_i: a\in I\}$, and that $J_i$ satisfies DCC for all $i$.\\

For part (4), by contradiction assume that there is a strictly decreasing sequence $\{r_k\}$ contained in $\sum_{i=1}^nI_n$. Let $r_k=\sum_{j=1}^n r_{kj}$, where $r_{kj}\in I_j$ for all $k$ and $j$. Since $\{r_{kj}\}_{k\>1}$ satisfies DCC for all $j$, by passing to subsequences with common indices we may assume that each $\{r_{kj}\}_{k\>1}$ is a monotonically increasing sequence for all $j$. It then follows that $r_k\<r_{k+1}$ for all $k\>1$, which is a contradiction, since $r_k>r_{k+1}$ for all $k\>1$.\\

For part (5) let's define $\mbZ_{\>0}\cdot I\:=\{nr: n\in\mbZ_{\>0}, r\in I\}$. Then 
\[\Span_N(I)=\mbZ_{\>0}\cdot I+\mbZ_{\>0}\cdot I+\cdots_{ (N\mbox{ times})}+\mbZ_{\>0}\cdot I.\]
 Thus (5) will follow from (4) if we can show that $\mbZ_{\>0}\cdot I$ is a DCC set. To that end, by contradiction assume that there is a strictly decreasing sequence $\{n_ir_i\}$ in $\mbZ_{\>0}\cdot I$, i.e., 
\begin{equation}\label{eqn:span-dcc-contradiction}
	n_1r_1>n_2r_2>n_3r_3>\cdots.
\end{equation}	
Since $\{r_i\}$ is contained in a DCC set $I$, by passing to an infinite subsequence we may assume that  $\{r_i\}$ is a monotonically increasing sequence, i.e.,
\begin{equation}\label{eqn:given-dcc}
	r_1\<r_2\<r_3\<\cdots
\end{equation}
Therefore we have $r_i\>r_1>0$ for all $i\>1$. From \eqref{eqn:span-dcc-contradiction} we also have that $n_1r_1\>n_ir_i$ for all $i\>1$. Thus we get that $n_1\>n_i$ for all $i\>1$. In particular, $\{n_i\}$ is a bounded sequence of positive integers, hence a finite set. Thus by passing to an infinite subsequence of \eqref{eqn:span-dcc-contradiction} we may assume that $n_i=n_{i+1}$ for all $i\>1$, which gives a contradiction to \eqref{eqn:given-dcc}.\\  

For part (6) see \cite[Lemma 4.4]{MP04}.\\

\end{proof}

\subsection{Adjunction} In this subsection we collect some results about adjunction to codimension $1$ subvarieties.
\begin{proposition}[Different]\label{pro:adjunction}
	Fix a DCC set $I\subset [0, 1]$. Let $(X, \Delta)$ be a log pair defined over an arbitrary field $k$ and $S$ a component of $\lrd\Delta\rrd$. Let $S^n\to S$ be the normalization morphism. Then there exists a canonically determined effective $\mbR$-divisor $\Delta_{S^n}\>0$ on $S^n$ such that
	\[
		(K_X+\Delta)|_{S^n}\sim_\mbR K_{S^n}+\Delta_{S^n}.
	\]

Moreover, if $(X, \Delta)$ is log canonical outside a codimension $3$ closed subset and the coefficients of $\Delta$ belong to $I$, then the coefficients of $\Delta_{S^n}$ belong to $D(I)$. More precisely: write $\Delta=S+\sum_{i\>2}d_iD_i$, let $P'$ be a prime Weil divisor on $S^n$ and $P$ its image on $S$. Then there exists $m\in\mbN\cup\{\infty\}$ depending only on $X, S$ and $P$, there are non-negative integers $l_i\in\mbZ_{\>0}$ depending only on $X, S, D_i$ and $P$ such that the coefficient of $P'$ in $\Delta_{S^n}$ is 
\[
	\frac{m-1}{m}+\sum_{i\>2}\frac{l_id_i}{m}.
\]
	\end{proposition}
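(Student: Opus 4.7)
Plan: My approach is to first establish the existence and canonicity of the different $\Delta_{S^n}$, and then extract the precise coefficient formula by a local computation at each prime divisor of $S^n$.

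For the existence, write $\Delta = S + \Gamma$ with $\Gamma = \sum_{i \geq 2} d_i D_i$. The standard construction of the different via the Poincar\'e residue map produces a canonically determined effective divisor $\Diff_{S^n}(0) \geq 0$ on $S^n$, supported on the locus where $S$ is singular or meets $\Sing(X)$, and satisfying $(K_X+S)|_{S^n} \sim_{\mathbb{R}} K_{S^n} + \Diff_{S^n}(0)$. Setting $\Delta_{S^n} := \Diff_{S^n}(0) + \Gamma|_{S^n}$, which is well-defined because $\Gamma$ is $\mathbb{R}$-Cartier at every codimension one point of $S^n$ by the log pair hypothesis, gives the required $\mathbb{R}$-linear equivalence $(K_X + \Delta)|_{S^n} \sim_{\mathbb{R}} K_{S^n} + \Delta_{S^n}$, and canonicity is automatic from the construction.

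For the precise coefficient formula, fix a prime divisor $P'$ on $S^n$ with image $P$ on $X$ and pass to the local ring $\mathcal{O}_{X,\eta}$ at the generic point $\eta$ of $P$, a two-dimensional local ring. By the hypothesis that $(X,\Delta)$ is log canonical outside a codimension $3$ subset, the pair is lc at $\eta$, reducing matters to the theory of surface log canonical pairs. Define $m \in \mathbb{N} \cup \{\infty\}$ to be the smallest positive integer such that $m(K_X + S)$ is Cartier at $\eta$, with the convention $m = \infty$ when no such integer exists (the lc condition then forces the coefficient of $P'$ in the different to be $1$); note $m$ depends only on $X$, $S$, and $P$. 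Passing to the local index-one cyclic cover $\pi: \tilde{X} \to \Spec \mathcal{O}_{X,\eta}$, a direct ramification computation shows that the coefficient of $P'$ in $\Diff_{S^n}(0)$ equals $\frac{m-1}{m}$, while the coefficient of $P'$ in $D_i|_{S^n}$ equals $\frac{l_i}{m}$, where $l_i := \mult_{\tilde{P}'}(\pi^* D_i)$ for $\tilde{P}'$ a preimage of $P'$ in the normalization of $\pi^{-1}(S)$; this $l_i$ depends only on $X$, $S$, $D_i$, and $P$. Summing yields
\[
\text{coefficient of } P' \text{ in } \Delta_{S^n} \;=\; \frac{m-1}{m} + \sum_{i \geq 2} \frac{l_i d_i}{m} \;=\; \frac{m - 1 + f}{m}, \qquad f := \sum_{i \geq 2} l_i d_i.
\]

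Since each $d_i \in I$ and each $l_i \in \mathbb{Z}_{\geq 0}$, the quantity $f$ is a finite non-negative integer combination of elements of $I$, so $f \in I_+$ by Definition \ref{def:dcc-sets}, and the displayed expression therefore lies in $D(I)$ by the definition of $D(I)$; combined with Lemma \ref{lem:dcc-sets}(6) this gives the DCC conclusion. The principal technical obstacle is the positive-characteristic case when $p \mid m$, where the cyclic index-one cover can be wildly ramified and the usual residue computation does not directly apply. I would circumvent this either by working with explicit local generators of $\mathcal{O}_X(m(K_X+S))$ at $\eta$, or, following Koll\'ar's characteristic-free treatment, by constructing the different through the semi-normalization rather than through a cyclic cover; both routes produce the same formula.
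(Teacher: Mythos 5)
Your overall strategy (reduce to the two-dimensional local ring $\mcO_{X,\eta}$ at the generic point of $P$, i.e.\ to an excellent surface germ) is the same reduction the paper makes, but the paper then simply invokes Birkar's Propositions 4.1 and 4.2, whose coefficient computation is done on the minimal resolution of the excellent surface germ by intersection-theoretic arguments (negativity of the exceptional intersection matrix), and is therefore characteristic-free and valid over imperfect residue fields. Your key step is different, and this is where the gap lies: you compute the coefficient via the local index-one cyclic cover of $K_X+S$ at $\eta$. In characteristic $p$ with $p\mid m$ this cover is wildly ramified (it need not even be normal or \'etale in codimension one over the germ), so the ``direct ramification computation'' giving $\frac{m-1}{m}$ and $\frac{l_i}{m}$ is exactly the step that fails, and this is the case of interest for the paper. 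You acknowledge the issue, but the proposed repairs are not proofs: ``explicit local generators'' is not carried out, and K\'ollar's characteristic-free treatment of the different does not go through semi-normalization --- it goes through minimal/log resolutions of excellent surface germs, i.e.\ precisely the Birkar-style argument the paper cites. Two further unjustified assertions sit inside this step: that $K_X+S$ is $\mbQ$-Cartier at $\eta$ at all (this is a consequence of the classification/rationality of lc surface germs containing a reduced boundary component, which is part of what must be proved in this generality), and that in the case $m=\infty$ the lc condition ``forces'' the coefficient to be $1$.

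There is also a smaller but genuine problem in your existence argument. You split $\Delta_{S^n}=\Diff_{S^n}(0)+\Gamma|_{S^n}$ and claim $\Gamma$ is $\mbR$-Cartier at every codimension-one point of $S^n$ ``by the log pair hypothesis.'' The log pair hypothesis only makes $K_X+\Delta$ $\mbR$-Cartier; at a codimension-two point of $X$ neither $K_X+S$ nor $\Gamma$ need be $\mbR$-Cartier (normal surface germs need not be $\mbQ$-factorial, e.g.\ cones over elliptic curves), and the first part of the proposition assumes no lc condition that would rule such points out. The correct construction defines the different $\Diff_{S^n}(\Gamma)$ for the whole boundary at once from the $\mbR$-Cartier divisor $K_X+S+\Gamma$ (as in Koll\'ar or Birkar), rather than as a sum of two pieces that are not separately defined. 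The final bookkeeping step (that $\frac{m-1+\sum l_id_i}{m}\in D(I)$ and the DCC conclusion) is fine, but it rests on the coefficient formula whose proof, as written, does not go through in positive characteristic.
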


\begin{proof}
	The proof is same as the proofs of Proposition 4.1 and 4.2 in \cite{Bir16}. We note that the proof in \cite{Bir16} essentially reduces the problem to a computation on excellent surfaces. Since any scheme of finite type over an arbitrary field is an excellent scheme and the local rings of excellent schemes are again excellent, the same proof works in our case. 
\end{proof}

\begin{lemma}[Easy Adjunction]\label{lem:easy-adjunction}
	Let $X$ be a normal variety over an arbitrary field $k$. Let $S$ be a prime Weil divisor and $\Delta\>0$ an effective $\mbR$-divisor on $X$ such that $S$ is not contained in the support of $\Delta$ and $K_X+S+\Delta$ is $\mbR$-Cartier. Let $S^n\to S$ be the normalization morphism and $(S^n, \Delta_{S^n})$ is defined by adjunction $K_{S^n}+\Delta_{S^n}\sim_\mbR (K_X+S+\Delta)|_{S^n}$.\\
	If $(X, S+\Delta)$ is terminal, canonical, plt or lc, then so is $(S^n, \Delta_{S^n})$, respectively.  
\end{lemma}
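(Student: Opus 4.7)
The plan is to bound the log discrepancy of an arbitrary prime divisor $F$ over $S^n$ in terms of discrepancies of divisors over $X$, using adjunction along the birational transform of $S$ on a suitable model. First, I would fix a prime divisor $F$ over $S^n$ and, via iterated normalized blowups of subvarieties of $S\subseteq X$, construct a projective birational morphism $h\colon Y\to X$ with $Y$ normal satisfying: (i) the birational transform $S_Y$ of $S$ in $Y$ is normal, (ii) the induced morphism $h|_{S_Y}\colon S_Y\to S$ factors through the normalization $\nu\colon S^n\to S$, and (iii) $F$ appears as a prime divisor on $S_Y$. Since only blowups and normalizations are used, the construction is valid over an arbitrary field without invoking resolution of singularities.

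Next I would combine the pullback on $Y$ with the different along $S_Y$. Setting $a_i:=a(E_i;X,S+\Delta)$ for the $h$-exceptional prime divisors $E_i\neq S_Y$ and splitting $\sum(-a_i)E_i=E^+-E^-$ into its positive and negative parts, one has
\[
K_Y+S_Y+h^{-1}_*\Delta+E^+ \;\sim_{\mbR}\; h^*(K_X+S+\Delta)+E^-.
\]
Under the lc hypothesis on $(X,S+\Delta)$ each $-a_i$ lies in $[0,1]$, and since $S\not\subseteq\Supp\Delta$ the divisor $B:=S_Y+h^{-1}_*\Delta+E^+$ is a boundary on $Y$. After ensuring $K_Y+B$ is $\mbR$-Cartier (by a small $\mbQ$-factorialization, or by working locally near the generic point of $F$), I would apply Proposition~\ref{pro:adjunction} to $(Y,B)$ along $S_Y$ and use the definition $(K_X+S+\Delta)|_{S^n}\sim_{\mbR}K_{S^n}+\Delta_{S^n}$ to obtain
\[
K_{S_Y}+\Theta-E^-|_{S_Y} \;\sim_{\mbR}\; (h|_{S_Y})^*(K_{S^n}+\Delta_{S^n}),
\]
where $\Theta\geq 0$ is the different of $(Y,B)$ along $S_Y$. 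Reading off the coefficient of $F$ from this identity yields $a(F;S^n,\Delta_{S^n})=\mathrm{coeff}_F(E^-|_{S_Y})-\mathrm{coeff}_F(\Theta)$. Since $E^-|_{S_Y}\geq 0$ and Proposition~\ref{pro:adjunction} bounds $\mathrm{coeff}_F(\Theta)\leq 1$ in the lc case (with the corresponding strict or vanishing variants in the klt/plt/canonical/terminal cases, coming from the strict or vanishing bounds on the coefficients of $B$), this transfers to the required inequality on $a(F;S^n,\Delta_{S^n})$ in each of the four cases.

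The main obstacle is the first step over an arbitrary (possibly imperfect, non-algebraically-closed) field, where resolution of singularities is unavailable and one must argue directly that a sequence of normalized blowups suffices to make $S_Y$ normal and $F$ visible on $S_Y$. A secondary technical issue is guaranteeing that $K_Y+B$ is $\mbR$-Cartier so that Proposition~\ref{pro:adjunction} applies; this is handled either by passing to a small $\mbQ$-factorialization or by observing that the coefficient computation is local at the generic point of $F$, where the different of Proposition~\ref{pro:adjunction} reduces to a computation on an excellent normal surface. Once these points are in place, the remainder of the argument is the coefficient bookkeeping outlined above.
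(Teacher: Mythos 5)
The paper's own proof of Lemma \ref{lem:easy-adjunction} is a one-line citation to \cite[Lemma 4.8]{Kol13}, so your sketch has to stand on its own, and it has a genuine gap at its very first step: the model construction. You assert that iterated normalized blowups with centers inside $S$ produce a projective birational $h\colon Y\to X$ on which the strict transform $S_Y$ is normal, maps to $S^n$, and carries the chosen divisor $F$ over $S^n$ as a prime divisor. Realizing a divisorial valuation of $k(S)$ on the strict transform (or its normalization) of $S$ inside a model of $X$ is not a formal consequence of a \cite[Lemma 2.45]{KM98}-type argument: blowing up $X$ along the image in $S$ of the center of $F$ induces on $S^n$ the blowup of an ideal which in general strictly contains the ideal of the center, so the termination argument does not apply; and demanding in addition that $S_Y$ itself be normal is a weak form of embedded resolution, which is precisely what is unavailable over an arbitrary (possibly imperfect) field in arbitrary dimension. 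This is why the argument behind \cite[Lemma 4.8]{Kol13} (and behind the paper's Proposition \ref{pro:adjunction}) proceeds by localizing at codimension-two points of $X$ and reducing to computations on excellent surfaces, never needing $F$ to become visible on a global model of $X$.

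There is a second, quieter gap in the coefficient bookkeeping. Once you move $E^-$ to the right-hand side, the pair $(Y,B)$ with $B=S_Y+h^{-1}_*\Delta+E^+$ is no longer crepant to $(X,S+\Delta)$, and its log canonicity at the generic point of $F$ (a codimension-two point of $Y$) is not implied by the hypotheses, since its discrepancies are those of $(X,S+\Delta)$ decreased by the pullback multiplicities of $E^-$. Without lc-ness at that point, the claim $\mathrm{coeff}_F(\Theta)\le 1$ does not follow from $B$ being a boundary: already for $X$ a smooth surface, $S$ a line and $\Delta=dC$ with $C$ tangent to $S$ to order $l$, the different along $S$ has coefficient $ld$, which exceeds $1$ even though $d\le 1$. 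If instead you keep the crepant sub-boundary to preserve lc-ness, you then need "sub-boundary coefficients at most $1$ at a codimension-two point implies the (sub-)different has coefficient at most $1$", which is exactly the codimension-one-center case of the lemma itself, i.e.\ the excellent-surface adjunction computation. So both halves of your argument ultimately rest on the surface-level computation and a localization step that your write-up does not supply; as written, the proof is incomplete, whereas the paper's citation to \cite[Lemma 4.8]{Kol13} packages precisely these points.
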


\begin{proof}
	It follows from \cite[Lemma 4.8]{Kol13}.\\
\end{proof}

\section{Lemmas and Propositions} 
\label{sec:lem-and-pro}

\subsection{The volume}
\begin{definition}\label{def:volumes}
	Let $X$ be a projective variety of dimension $n$ over an algebraically closed field $k$. Let $D$ be a $\mbR$-Cartier divisor.
	Then the \emph{volume} of $D$ is defined as
	\[
		\vol_X(D):=\limsup_{m\to+\infty}\frac{n!\dim_kH^0(X, \mcO_X(\lrd mD\rrd))}{m^n}.
	\] 
\end{definition}
It is know that $D$ is big if and only if $\vol_X(D)>0$.\\

In the following lemmas we establish some perturbation techniques for log pairs $(X, \Delta)$ using a divisor $D$ such that $\vol_X(D)>n^n$.

\begin{lemma}\label{lem:creating-singular-divisors}
	Let $X$ be a proper variety of dimension $n$ defined over an algebraically closed field $k$ of arbitrary characteristic. Let $M$ be a big $\mbQ$-Cartier divisor on $X$ such that $\vol_X(M)>n^n$. Then for every smooth closed point $x\in X$, there exists an effective $\mbQ$-Cartier divisor $D=D(x)\>0$ such that $D\sim_\mbQ M$ and 
	\[
		\mult_x D>n.
	\]
\end{lemma}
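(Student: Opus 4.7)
The plan is a standard dimension-counting argument: for a sufficiently divisible Cartier multiple of $M$, the space of global sections grows faster in the multiplier than the codimension imposed by vanishing to order $mn$ at $x$, so some section must vanish to strictly higher order, and its divisor, suitably scaled, gives the desired $D$.

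To set up, first I would fix $m_0\in\mbN$ with $L:=m_0 M$ Cartier. By homogeneity of the volume, $\vol_X(L)=m_0^n\vol_X(M)>(m_0 n)^n$. Using the standard fact that the volume of a Cartier divisor on a proper variety is a genuine limit (in any characteristic, cf.\ Lazarsfeld's \emph{Positivity in Algebraic Geometry}) gives
\[
\dim_k H^0\bigl(X,\mcO_X(mL)\bigr)=\frac{\vol_X(L)}{n!}\,m^n+o(m^n)\qquad(m\to\infty).
\]
Since $x$ is a smooth closed point over the algebraically closed field $k$, the local ring $\mcO_{X,x}$ is regular of dimension $n$ with residue field $k$, so for every integer $a\geq 0$,
\[
\dim_k\bigl(\mcO_{X,x}/\mfm_x^{a+1}\bigr)=\binom{a+n}{n}\leq\frac{(a+n)^n}{n!}.
\]

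Next I would consider the evaluation map
\[
\rho_m\colon H^0\bigl(X,\mcO_X(mL)\bigr)\longrightarrow \mcO_X(mL)\otimes_{\mcO_X}\mcO_{X,x}/\mfm_x^{mm_0 n+1},
\]
whose kernel consists of sections vanishing at $x$ to order at least $mm_0 n+1$. Taking $a=mm_0 n$, the target has dimension at most $(mm_0 n+n)^n/n!$, whose leading term in $m$ is $(m_0 n)^n m^n/n!$. Since $\vol_X(L)>(m_0 n)^n$ strictly, for $m\gg 0$ the domain of $\rho_m$ is strictly larger than the target, hence $\ker\rho_m\neq 0$. Pick any nonzero $s\in\ker\rho_m$ and set $D:=\tfrac{1}{mm_0}\div(s)$; then $D\geq 0$, $D\sim_{\mbQ} M$, and
\[
\mult_x D\geq\frac{mm_0 n+1}{mm_0}>n.
\]

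The only mildly delicate input is the asymptotic formula for $\dim H^0(X,mL)$ — namely, that the $\limsup$ in Definition \ref{def:volumes} is a genuine limit along multiples of a Cartier index of $M$. This is classical and characteristic-free, so the remainder of the argument is just a comparison of two degree-$n$ polynomials in $m$ with leading coefficients $\vol_X(L)$ and $(m_0 n)^n$; the strict inequality $\vol_X(L)>(m_0 n)^n$ closes the gap for all sufficiently large $m$.
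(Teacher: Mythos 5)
Your proof is correct and follows essentially the same dimension count as the paper's: compare the growth of $h^0(X,\mcO_X(mm_0M))$ with $\dim_k\mcO_{X,x}/\mfm_x^{mm_0n+1}$ to force a section vanishing at $x$ to order $>mm_0n$, then scale its divisor. The only cosmetic difference is that you invoke the fact that the volume is a genuine limit, whereas the $\limsup$ definition already suffices (and is what the paper uses), since your comparison only needs to hold for a single sufficiently large $m$.
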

\begin{proof}
	The following proof is based on the proof of \cite[Lemma 6.1]{Kol97}.\\
Since $\vol_X(M)>n^n$, there exists a small positive rational number $\delta\in\mbQ^+$ such that $\vol_X(M)>(n+\delta)^n$. Let $t>0$ be a positive integer such that $tM$ is a Cartier divisor. Let $m_x\subset\mcO_X$ be the ideal sheaf of $\{x\}\subset X$. For a positive integer $s>0$ consider the following exact sequence
	\[
		\xymatrix{0\ar[r] &  m_x^s\otimes\mcO_X(tM)\ar[r] & \mcO_X(tM)\ar[r] & (\mcO_X/m_x^s)\otimes\mcO_X(tM)\cong\mcO_X/m_x^s\ar[r] & 0.}
	\]
	
Then observe that
\begin{equation}\label{eqn:dimension-inequality}
	h^0(X, m_x^s\otimes\mcO_X(tM))>0 \quad \mbox{if}\quad h^0(X, \mcO_X(tM))>h^0(X, \mcO_X/m_x^s).
\end{equation}
Let $\{x_1, x_2\ldots, x_n\}$ be a local coordinate system around $x\in X$, i.e., it is a $k(x)$-basis of the vector space $m_x/m_x^2$. Then we have 
\begin{equation}\label{eqn:dim-calculation}
	h^0(X, \mcO_X/m^s)=\dim_kk[x_1, x_2,\ldots, x_n]/(x_1, x_2,\ldots, x_n)^s=\binom{n+s-1}{n}=\frac{s^n}{n!}+O(s^{n-1}).
\end{equation}

Since $\vol_X(M)>(n+\delta)^n$, from the definition of volume it follows that $h^0(X, \mcO_X(tM))>\frac{((n+\delta)t)^n}{n!}$ for infinitely many values of $t$ which are sufficiently large and divisible. Thus from \eqref{eqn:dim-calculation} we see
that $h^0(X, \mcO_X(tM))>h^0(X, \mcO_X/m_x^s)$ for $s, t$ sufficiently large and divisible satisfying $t(n+\delta)>s$, where $n=\dim X$. In particular, from \eqref{eqn:dimension-inequality} it follows that $h^0(X, m_x^s\otimes\mcO_X(tM))>0$ for $s, t$ sufficiently large and divisible satisfying $t(n+\delta)>s$.\\

Now let $\ve\in\mbQ^+$ be a small positive rational number satisfying $0<\ve<\delta$. Choose $t$ sufficiently large and divisible so that the open interval $( t(n+\delta-\ve), t(n+\delta))$ contains a positive integer, say $s>0$, i.e., $t(n+\delta-\ve)<s<t(n+\delta)$, i.e., $n+\delta-\ve<s/t<n+\delta$. Let $D(s, t, x)$ be the divisor of zeros of a non-zero global section of $m_x^s\otimes\mcO_X(tM)$ and $D(x)=D(s, t, x)/t$. Then $\mult_xD(x)\>s/t>n+\delta-\ve>n$. 
	
\end{proof}

\begin{lemma}\label{lem:klt-perturbation}
Let $X$ be a normal projective variety of dimension $n$ defined over an algebraically closed field $k$. Let $(X, \Delta)$ be a log pair and $D$ a $\mbR$-Cartier divisor such that $\vol_X(D)>n^n$. Then for every \emph{smooth} closed point $x\in X$, there exists an effective $\mbR$-Cartier divisor $\Pi\sim_\mbR D$ passing through $x\in X$ such that $(X, \Delta+\Pi)$ is not klt at $x\in X$.  	
\end{lemma}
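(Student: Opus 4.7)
The plan is to reduce the statement to Lemma~\ref{lem:creating-singular-divisors} by a $\mbQ$-approximation of the $\mbR$-Cartier divisor $D$. The key step is to produce a $\mbQ$-Cartier divisor $M$ on $X$ and an effective $\mbR$-Cartier divisor $F \geq 0$ with $D \sim_{\mbR} M + F$ and $\vol_X(M) > n^n$. To do this, I would write $D = \sum_{i=1}^k r_i C_i$ with each $C_i$ Cartier and $r_i \in \mbR$, fix an ample Cartier divisor $H$ on $X$, and set $M := \sum r_i^{(j)} C_i - \epsilon H$ for rationals $r_i^{(j)}$ close to $r_i$ and a small rational $\epsilon > 0$. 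Then $M$ is $\mbQ$-Cartier, and by continuity of the volume function on $N^1(X)_{\mbR}$, for $\epsilon$ small and $r_i^{(j)}$ sufficiently close to $r_i$, we get $\vol_X(M) > n^n$. Moreover, $D - M = \sum (r_i - r_i^{(j)}) C_i + \epsilon H$ has class close to $\epsilon[H]$ in $N^1(X)_{\mbR}$, hence inside the ample cone, so $D - M$ is ample; Kodaira's lemma for $\mbR$-divisors then produces an effective $\mbR$-Cartier divisor $F$ with $F \sim_{\mbR} D - M$.

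Next I would apply Lemma~\ref{lem:creating-singular-divisors} to the $\mbQ$-Cartier big divisor $M$ at the smooth point $x$, obtaining an effective $\mbQ$-Cartier divisor $\Pi_0 \sim_{\mbQ} M$ with $\mult_x \Pi_0 > n$. Then $\Pi := \Pi_0 + F$ is effective and $\mbR$-Cartier with $\Pi \sim_{\mbR} M + F \sim_{\mbR} D$ and $\mult_x \Pi \geq \mult_x \Pi_0 > n$; in particular $x \in \Supp \Pi$. To deduce that $(X, \Delta + \Pi)$ is not klt at $x$, I would blow up $x$: since $x$ is smooth of dimension $n$, the blowup $\pi \colon Y \to X$ with exceptional divisor $E$ satisfies $K_Y = \pi^* K_X + (n-1)E$, and the standard discrepancy computation gives
\[
a_E(X, \Delta + \Pi) \;=\; (n-1) - \mult_x(\Delta + \Pi) \;\leq\; (n-1) - \mult_x \Pi \;<\; -1,
\]
so $(X, \Delta + \Pi)$ is not klt (in fact not log canonical) at $x$.

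The main obstacle is the $\mbQ$-approximation in the first step: one needs the continuity of $\vol$ on $N^1(X)_{\mbR}$ and Kodaira's lemma for $\mbR$-Cartier divisors to be available over an algebraically closed field of positive characteristic. The preliminaries of the paper indicate that these basic tools (Nakai-Moishezon, Kodaira, and volume continuity) do hold in this generality, so once this reduction is accomplished, the remaining steps, namely invoking Lemma~\ref{lem:creating-singular-divisors} and performing the blowup discrepancy calculation, are entirely routine.
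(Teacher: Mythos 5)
Your proposal is correct and follows essentially the same route as the paper: reduce to the $\mbQ$-Cartier case via Lemma \ref{lem:creating-singular-divisors} and continuity of the volume, then conclude with the blow-up discrepancy computation $a_E(X,\Delta+\Pi)=(n-1)-\mult_x(\Delta+\Pi)<-1$ at the smooth point $x$. The only divergence is in how the effective correction term is produced: the paper chooses a $\mbQ$-Cartier divisor $D'$ with $D\ge D'$ and $\vol_X(D')>n^n$ and sets $\Pi=\Pi'+(D-D')$, whereas you approximate inside the span of Cartier divisors presenting $D$, subtract a small ample $\epsilon H$, and take an effective $F\sim_{\mbR} D-M$ using openness of ampleness; your variant is in fact slightly more careful, since it neither requires the difference $D-D'$ to be effective as a divisor nor the rational approximant to be $\mbQ$-Cartier (a point the paper's one-line construction glosses over when $X$ is not $\mbQ$-factorial), while the paper's version is shorter when such a $D'$ is available.
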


\begin{proof}
	
	First assume that $D$ is a $\mbQ$-Cartier divisor. Then by Lemma \ref{lem:creating-singular-divisors}, there exists an effective $\mbQ$-Cartier divisor $0\<\Pi\sim_\mbQ D$ such that $\mult_x\Pi>n$. Then by blowing up $X$ at $x$ it is easy to see that $(X, \Delta+\Pi)$ is not klt.
	
		
		Now consider the case when $D$ is an $\mbR$-Cartier divisor. Since the volume is a continuous function (see \cite[Theorem 2.2.44]{Laz04a}), there exists an effective $\mbQ$-Cartier divisor $D'\>0$ sufficiently close to $D$ such that $D\>D'$ and $\vol_X(D')>n^n$. Then by what we have just proved there exists an effective $\mbQ$-Cartier divisor $\Pi'\sim_\mbQ D'$ such that $(X, \Delta+\Pi')$ is not klt. Let $D=D'+E'$ and $\Pi=\Pi'+E'$, where $E'$ is an effective divisor. Then $\Pi\sim_\mbR D$ and $(X, \Delta+\Pi)$ is not klt.\\

\end{proof}

The following two lemmas (\ref{lem:integral-generic-fiber} and \ref{lem:singularities-generic-fiber}) show what kind of properties of the total space of a fibration $f:X\to Y$ transfer to its \emph{generic fiber} $X_\eta$.

\begin{lemma}\cite[Lemma 2.20]{BCZ18}\label{lem:integral-generic-fiber}
	Let $f:X\to Y$ be a dominant morphism of finite type between two integral schemes of finite type over a field $k$ of arbitrary characteristic. Let $\eta$ be the generic point of $Y$, and $X_\eta$ the generic fiber. Then the following statements hold:
	\begin{enumerate}
		\item $X_\eta$ is an integral scheme.
		\item $K(X_\eta)\cong K(X)$.
		\item If $x'$ is a point in $X_\eta$ and $x$ its image in $X$ through the set theoretic inclusion $X_\eta\subset X$, then $\mcO_{X_\eta, x'}\cong\mcO_{X, x}$.
		\end{enumerate}
In particular, if $X$ is normal (resp. regular, resp. $\mbQ$-factorial), then $X_\eta$ normal (resp. regular, resp. $\mbQ$-factorial).\\
		
	\end{lemma}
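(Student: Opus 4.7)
The plan is to reduce to the affine case, where $X_\eta$ becomes an explicit localization of $X$, and then read off (1)--(3) from standard commutative algebra; the ``in particular'' clauses follow because normality and regularity are stalk-local properties, while $\mbQ$-factoriality requires one short closure argument.

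First, since forming the generic fiber is unchanged by replacing $Y$ with any affine open containing $\eta$, I shrink to $Y = \Spec B$. I then cover $X$ by affine opens $U_i = \Spec A_i$; since $X$ is irreducible each $U_i$ is dense in $X$, so the dominance of $f$ forces $U_i \to Y$ to be dominant as well, and hence the ring map $B \to A_i$ is injective. Writing $S := B \setminus \{0\}$, the affine piece of the generic fiber is $(U_i)_\eta = \Spec(A_i \otimes_B K(B)) = \Spec(S^{-1} A_i)$, a localization of the domain $A_i$. For (1), each $(U_i)_\eta$ is integral as a localization of a domain; the generic point $\eta_X$ of $X$ maps to $\eta_Y = \eta$ so $\eta_X \in X_\eta$, and every point of $X_\eta$ is a specialization of $\eta_X$ inside $X$ and hence also inside $X_\eta$, so $X_\eta$ is irreducible with generic point $\eta_X$. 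Combined with the affine-local reducedness this gives integrality.

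For (3), a point $x' \in X_\eta$ lives in some $(U_i)_\eta$ and corresponds to a prime $\mfp \subset A_i$ disjoint from $S$, whose image in $X$ is exactly the point $x$; the elementary localization identity $(S^{-1} A_i)_{S^{-1}\mfp} = (A_i)_\mfp$ (valid because $S \subseteq A_i \setminus \mfp$) then yields $\mcO_{X_\eta, x'} \cong \mcO_{X, x}$. Part (2) is obtained by applying (3) at the generic point, giving $K(X_\eta) = \mcO_{X_\eta, \eta_X} \cong \mcO_{X, \eta_X} = K(X)$.

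For the ``in particular'' statement, normality and regularity are conditions on the local rings, so (3) transfers them from $X$ to $X_\eta$ pointwise. For $\mbQ$-factoriality, given any prime Weil divisor $V' \subset X_\eta$, its closure $V := \overline{V'} \subset X$ is a prime Weil divisor on $X$ dominating $Y$ and satisfies $V \cap X_\eta = V'$; if $mV$ is Cartier on $X$ for some positive integer $m$, then $mV' = (mV)|_{X_\eta}$ is Cartier on $X_\eta$, and extending linearly handles arbitrary Weil divisors. I do not expect any real obstacle in this proof: the entire statement is essentially an instance of flat (indeed, localization) base change, and the only piece worth verifying carefully is the bijection between prime Weil divisors on $X_\eta$ and prime Weil divisors on $X$ dominating $Y$, which itself follows from the localization description above.
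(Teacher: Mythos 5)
Your proposal is correct and follows essentially the same route as the source the paper cites for this statement (the paper gives no proof of its own, only the reference to \cite[Lemma 2.20]{BCZ18}): affine-locally the generic fiber is $\Spec S^{-1}A$ with $S=B\setminus\{0\}$, which gives integrality, the identification of stalks and of function fields, and hence normality and regularity, while $\mbQ$-factoriality descends by taking closures of prime divisors and restricting Cartier multiples. The only step you state without justification, that $\overline{V'}$ has codimension one in $X$ and meets $X_\eta$ exactly in $V'$, follows immediately from part (3) applied to the generic point of $V'$ together with part (1) applied to $\overline{V'}\to Y$, so there is no genuine gap.
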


\begin{lemma}\cite[Corollary 2.2]{DW18}\label{lem:singularities-generic-fiber}
	Let $f:X\to Y$ be a dominant morphism between two varieties with $X$ normal. Let $\eta$ be the generic point of $Y$ and $X_\eta$ the generic fiber. Further assume that $(X, \Delta)$ is a pair such that $K_X+\Delta$ is $\mbR$-Cartier. If $(X, \Delta)$ has terminal, canonical, klt, plt, dlt or lc singularities, then the pair $(X_\eta, \Delta|_{X_\eta})$ has terminal, canonical, klt, plt, dlt or lc singularities, respectively.\\
	\end{lemma}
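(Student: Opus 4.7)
The plan is to reduce each of the listed singularity classes to a discrepancy condition that can be verified on a single birational model, and then to transport such models between $X$ and its generic fiber. First I would check the set-up: by Lemma \ref{lem:integral-generic-fiber}, $X_\eta$ is normal; moreover, components of $\Delta$ that do not dominate $Y$ have image missing the generic point $\eta$, so they do not meet $X_\eta$. Consequently $\Delta|_{X_\eta}$ is a well-defined effective $\mbR$-divisor obtained by restricting the dominant components of $\Delta$. Since $K_X + \Delta$ is $\mbR$-Cartier, its restriction is $\mbR$-Cartier and equals $K_{X_\eta} + \Delta|_{X_\eta}$.

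For the classes defined purely by discrepancies---terminal, canonical, klt, plt and lc---it suffices to show that every prime divisor $E'$ over $X_\eta$ arises from a prime divisor $E$ over $X$ with $a(E'; X_\eta, \Delta|_{X_\eta}) = a(E; X, \Delta)$. Given such $E'$ realized on a normal birational model $g_\eta: Y_\eta \to X_\eta$, I would spread $g_\eta$ out: since it is a finite-type morphism between schemes of finite type over $K(Y)$, standard limit arguments produce a birational morphism $g: Y \to X$ defined over an open neighborhood of $\eta \in Y$, with $Y$ normal (passing to the normalization of a spread-out model if necessary), whose generic fiber recovers $g_\eta$. The closure $E \subset Y$ of $E'$ is then a prime divisor, and restricting the discrepancy formula
\[
K_Y = g^*(K_X+\Delta) - g^{-1}_*\Delta + \sum_i a(E_i; X, \Delta)\, E_i
\]
to the generic fiber recovers the analogous formula for $K_{Y_\eta}$ over $(X_\eta, \Delta|_{X_\eta})$, yielding $a(E'; X_\eta, \Delta|_{X_\eta}) = a(E; X, \Delta)$. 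Any discrepancy inequality on $(X, \Delta)$ therefore descends to $(X_\eta, \Delta|_{X_\eta})$, taking care of the first five classes.

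For the dlt case, I would use the characterization that $(X, \Delta)$ is dlt iff there exists an open $U \subset X$ such that $U$ is smooth, $\Delta|_U$ has simple normal crossing support, and every divisor $E$ over $X$ with center not contained in $U$ satisfies $a(E; X, \Delta) > -1$. Then $U_\eta := U \cap X_\eta$ is nonempty and open in $X_\eta$ with $\Delta|_{U_\eta}$ simple normal crossing, and the discrepancy condition transfers by the argument above. The main obstacle in this plan is the spreading-out step: one must ensure both that $Y_\eta \to X_\eta$ extends to a birational morphism of normal varieties over a neighborhood of $\eta$ and that the correspondence $E \leftrightarrow E'$ preserves coefficients in the canonical divisor formula. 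This is ultimately enabled by Lemma \ref{lem:integral-generic-fiber}(3), which identifies the local rings of $Y_\eta$ with those of $Y$ at matching points, so that valuation-theoretic invariants like discrepancies are automatically preserved under the localization at $\eta$.
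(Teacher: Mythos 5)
The paper does not actually prove this lemma; it is quoted directly from \cite[Corollary 2.2]{DW18}, so there is no internal proof to compare against. Your localization/spreading-out argument is the standard proof underlying that citation and is correct in outline: divisors over $X_\eta$ correspond exactly to divisors over $X$ whose center dominates $Y$ (take closures, or spread a model of $X_\eta$ out over an open subset of $Y$), and Lemma \ref{lem:integral-generic-fiber}(3) identifies the relevant local rings, so centers, codimensions of centers, strict transforms and pullbacks of $\mbR$-Cartier divisors all restrict compatibly. The one step you should not absorb into the phrase ``restricting the discrepancy formula'' is precisely the nontrivial content of the cited result: that the restriction to the generic fiber of an (absolute) canonical divisor of $X$, or of a model, is again a canonical divisor of the fiber \emph{relative to} $K(Y)$ --- the dimensions drop, so this is not literally the same differential form; it follows after shrinking $Y$ to a regular open subset from $\omega_{X/Y}|_{X_\eta}\cong\omega_{X_\eta/K(Y)}$ together with the triviality of $\omega_Y$ at $\eta$, and it is what makes $a(E';X_\eta,\Delta|_{X_\eta})=a(E;X,\Delta)$ legitimate. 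Two smaller points: since $K(Y)$ is imperfect, in the dlt case the open set $U_\eta$ is only \emph{regular}, not smooth over $K(Y)$, so your argument works because the paper's definition of dlt (\cite[Definition 2.8]{Kol13}) is the scheme-theoretic one phrased via regularity/snc for excellent schemes, and would need modification under a smoothness-based definition; and you reuse the letter $Y$ both for the base of $f$ and for your birational model, which should be fixed.
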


\begin{lemma}\label{lem:convex-lct}
	Let $(X, \Delta)$ be a klt pair of dimension $3$ and $\Phi$ an $\mbR$-divisor such that the pair $(X, \Phi)$ is not-log canonical. Then there exists $\lambda\in (0, 1)$ such that $(X, (1-\lambda)\Delta+\lambda\Phi)$ is log canonical but not klt.
\end{lemma}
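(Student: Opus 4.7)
The plan is to exploit the linearity of discrepancies in the boundary divisor by considering the one-parameter family
\[
\Delta_\lambda := (1-\lambda)\Delta + \lambda\Phi, \quad \lambda \in [0,1].
\]
Since $\Delta$ and $\Phi$ are effective boundary divisors and both $K_X + \Delta$ and $K_X + \Phi$ are $\mbR$-Cartier, so is $K_X + \Delta_\lambda = (1-\lambda)(K_X + \Delta) + \lambda(K_X + \Phi)$. For any birational morphism $f\colon Y \to X$ and any prime divisor $E$ on $Y$, pulling back the convex combination gives
\[
a(E; X, \Delta_\lambda) = (1-\lambda)\, a(E; X, \Delta) + \lambda\, a(E; X, \Phi),
\]
which is an affine linear, hence continuous, function of $\lambda$.

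The first step is to fix a common log resolution $f\colon Y \to X$ of $(X, \Delta + \Phi)$, which is available because $\dim X = 3$ and the characteristic is $p>5$ (resolution of $3$-folds in positive characteristic). Let $E_1, \ldots, E_N$ denote the prime divisors on $Y$ that are either $f$-exceptional or strict transforms of components of $\Delta + \Phi$, and write $b_i := a(E_i; X, \Delta)$ and $c_i := a(E_i; X, \Phi)$. Since $(X, \Delta)$ is klt, $b_i > -1$ for every $i$. Since $(X, \Phi)$ is not log canonical and lc-ness of a pair can be tested on any log resolution (a standard fact, cf.~\cite{Kol13}), there exists an index $i_0$ with $c_{i_0} < -1$.

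The desired $\lambda$ is then built explicitly. For each $i$ with $c_i \leq -1$, the linear function $a_i(\lambda) := (1-\lambda)b_i + \lambda c_i$ attains the value $-1$ at
\[
\lambda_i := \frac{b_i + 1}{b_i - c_i} \in (0, 1],
\]
with $\lambda_{i_0} < 1$ strictly because $c_{i_0} < -1$. Setting $\lambda := \min\{\lambda_i : c_i \leq -1\}$ yields a value in $(0,1)$ such that $a_i(\lambda) \geq -1$ for every $i$, with equality for at least one index. By the log-resolution criterion, $(X, \Delta_\lambda)$ is therefore log canonical but not klt, as required.

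The main obstacle is the appeal to a log resolution of $(X, \Delta + \Phi)$ together with the reduction of the lc/klt test to the finitely many divisors it exposes; both are available in dimension $3$ with $p>5$ and are used elsewhere in the paper, so this does not pose a genuine difficulty. Beyond those ingredients, the argument is just the intermediate value theorem applied to finitely many affine linear functions of $\lambda$, and no further subtlety arises.
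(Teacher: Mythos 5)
Your proof is correct and follows essentially the same route as the paper: fix a log resolution of $(X,\Delta+\Phi)$, use linearity of the discrepancies $a(E;X,(1-\lambda)\Delta+\lambda\Phi)$ in $\lambda$, and choose the first $\lambda$ at which some discrepancy hits $-1$. Your explicit $\lambda=\min\{(b_i+1)/(b_i-c_i): c_i\le -1\}$ is just a cleaner formulation of the paper's argument via the graphs of the linear functions $a_i(t)$, and correctly handles the divisors with $c_i>-1$ that the paper dismisses with a ``without loss of generality.''
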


\begin{proof}
	
	Let $f:Y\to X$ be a log resolution of $(X, \Delta+\Phi)$. Consider the pair $(X, (1-t)\Delta+t\Phi)$. Let $\{E_i\}$ be the set of all exceptional divisors of $f$ and also the strict transform of the components of $\Delta$ and $\Phi$. Let $a_i(t)$, $a_i(E_i; X, \Delta)$ and $a_i(E_i; X, \Phi)$ be the discrepancy of $E_i$ with respect to the pairs $(X, (1-t)\Delta+t\Phi), (X, \Delta)$ and $(X, \Phi)$, respectively. Then it is clear that $a_i(t)=(1-t)a_i(E_1; X, \Delta)+ta_i(E_i; X, \Delta)$. Therefore $a_i(t)$ is either a constant polynomial or linear polynomial in $t$. Note that $a_i(t)$ will be a constant polynomial if and only if $a_i(E; X, \Delta)=a_i(E; X, \Phi)$. Since $(X, \Delta)$ is klt and and $(X, \Phi)$ is not log canonical, if follows that there exists at least one $i$ such that $a_i(t)$ is a polynomial of degree $1$. Moreover, from the same hypothesis it also follows that $a_i(0)>-1$ for all $i$ and $a_j(1)<-1$ for some $j$. Without loss of generality assume that $a_i(t)$ is a polynomial of degree $1$ for all $i$. Then from the graphs of $a_i(t)$'s (which are straight lines) it is clear that that there exists a $t=\lambda\in(0, 1)$ such that $a_i(\lambda)\>-1$ for all $i$ and $a_j(\lambda)=-1$ for at least one $j$. In particular, $(X, (1-\lambda)\Delta+\lambda\Phi)$ is log canonical but not klt for some $\lambda\in (0, 1)$.

\end{proof}

\begin{lemma}\label{lem:q-gorenstein}
Let $(X, \Delta)$ be a log canonical pair of dimension $2$ defined over an algebraically closed field $k$. Then $X$ is $\mbQ$-Gorenstein, i.e., $K_X$ is $\mbQ$-Cartier.	
\end{lemma}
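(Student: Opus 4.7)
My plan is to reduce to a local question about a surface singularity and then invoke the classification of two-dimensional log canonical singularities. Since being $\mbQ$-Cartier is a local property on $X$, I fix a closed point $x \in X$ and work in the germ $(X, x)$ of a normal surface singularity with $(X, \Delta)$ log canonical on a neighborhood of $x$. Taking a log resolution $f\colon Y \to X$ of the pair $(X, \Delta)$, the theorem of Mumford--Lipman (valid over arbitrary algebraically closed fields) gives that the intersection matrix of the exceptional divisors $E_1, \dots, E_n$ is negative definite. Since $K_X + \Delta$ is $\mbR$-Cartier by hypothesis, one can write
\[
f^*(K_X + \Delta) = K_Y + \widetilde{\Delta} + \sum_{i=1}^n a_i E_i,
\]
with $a_i \leq 1$ by the log canonical condition and the coefficients of the strict transform $\widetilde{\Delta}$ in $[0, 1]$.

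Next I would appeal to the classification of two-dimensional log canonical germs. Over an algebraically closed field, any such germ falls into one of a short list of types determined by the dual graph of the minimal resolution: the rational quotient-type singularities (Du Val $ADE$, cyclic, dihedral, and $E_6, E_7, E_8$ quotients), the simple elliptic singularities, the cusp singularities, and the small finite quotients of these last two. In the quotient-type cases the germ is \emph{rational}, and Lipman's theorem yields that the local class group $\Cl(\mcO_{X,x})$ is finite, so every Weil divisor---in particular $K_X$---is $\mbQ$-Cartier. In the simple elliptic and cusp cases the germ is \emph{Gorenstein}, so $K_X$ is Cartier outright; their bounded quotients are $\mbQ$-Gorenstein of index bounded by a small explicit integer. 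In every case, $K_X$ is $\mbQ$-Cartier, as required.

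The main obstacle in a fully self-contained proof would be verifying the dual-graph classification in positive characteristic, together with the fact that the rational cases have finite local class group. In characteristic zero this is classical (see \cite{Kol13}, Chapter 3, and the references therein). In positive characteristic the same list of dual graphs appears, as can be extracted from the surface minimal model program developed by Tanaka \cite{Tan18}; no small-characteristic pathologies affect this part of the classification. With the classification in place, the argument above yields the lemma immediately.
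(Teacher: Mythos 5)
Your route---classify the two-dimensional log canonical germ by its dual graph, then use Lipman's finiteness of the local class group in the rational cases and Gorenstein-ness in the simple elliptic/cusp cases---can be made to work in characteristic zero, but as written it has two genuine gaps, and they sit exactly where the content of the lemma lies. First, before you know that $K_X$ is $\mbQ$-Cartier you cannot speak of discrepancies of $(X,0)$ at all, so assigning the germ a ``type'' (quotient, simple elliptic, cusp, \dots) already requires Mumford's numerical pullback on the resolution, i.e.\ the notion of a \emph{numerically} log canonical pair as in \cite[4.1]{KM98}, together with the observation that effectivity of $\Delta$ and negative definiteness of the exceptional intersection matrix force the numerical discrepancies of $(X,0)$ to be at least those of $(X,\Delta)$. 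Your sketch skips this step and instead invokes a classification which, in its usual formulation, presupposes the $\mbQ$-Gorenstein property you are trying to establish. Second, the decisive claims in your case analysis---that in positive characteristic the same dual-graph list occurs for such numerically lc germs, and that the non-rational (simple elliptic and cusp) germs are ($\mbQ$-)Gorenstein---are precisely the hard part of the statement; deferring them to ``can be extracted from Tanaka's MMP for excellent surfaces'' is not a proof, since the surface MMP of \cite{Tan18} does not by itself yield either assertion. In other words, the proposal assumes, in its most difficult case, essentially the fact to be proved.

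For comparison, the paper's argument is a two-line reduction to exactly the result that fills both gaps: since $(X,\Delta)$ is lc and $\Delta\>0$, the pair $(X,0)$ is numerically log canonical, and then \cite[Proposition 6.3(b)]{FT12} (valid in the generality needed here) says that a numerically lc surface is lc, so $K_X$ is $\mbQ$-Cartier. Your treatment of the rational cases via Lipman's theorem is correct in any characteristic and would give a genuine alternative there; but to salvage the elliptic/cusp cases you would either have to prove the ($\mbQ$-)Gorenstein property of those germs in characteristic $p$ directly or cite a result of Fujino--Tanaka type, at which point you have reproduced the paper's proof.
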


\begin{proof}
	Since $(X, \Delta)$ is log canonical, $(X, 0)$ is numerically log canonical (see \cite[4.1]{KM98} for the definition). Then by \cite[Proposition 6.3(b)]{FT12}, $(X, 0)$ is log canonical, i.e., $K_X$ is $\mbQ$-Cartier.\\
\end{proof}

\section{Log canonical thresholds}

\begin{definition}\label{def:acc-for-lct}
Let $I$ and $J$ be two sets such that $I\subset [0, 1]$ and $J\subset\mbR_{\>0}$. Let $\mfI_n(I)$ be the set of all log pairs $(X, \Delta)$ of dimension $n$ over arbitrary fields $k$ such that $(X, \Delta)$ is log canonical and the coefficients of $\Delta$ belong to $I$. Let $M$ be an effective $\mbR$-Cartier divisor on $X$. Then we define
\[
	\lct(X, \Delta; M)=\sup\{t\in\mbR\ |\ (X, \Delta+tM) \mbox{ is log canonical} \},
\]
and 
\[
	\LCT_n(I, J)=\{\lct(X, \Delta; M)\ |\ (X, \Delta)\in\mfI_n(I) \mbox{ and the coefficients of } M \mbox{ belong to } J \}.
\]
\end{definition}

\begin{remark}\label{rmk:lct}
	In the defintion above we allow the possibility that there can be two different pairs $(X, \Delta)$ and $(X', \Delta')$ contained in $\mfI_{n}(I)$ such that $X$ is defined over a field $k$ and $X'$ over $k'$ but $k$ is \emph{not isomorphic} to $k'$. This does not create any problem since from the defintion of log canonical thresholds (lct) it clearly follows that the lct does not depend on the base field of the ambient variety. 
\end{remark}

\begin{conjecture}[$\LCT_n(I, J)$]\label{cnj:acc}
	If $I$ and $J$ satisfies the DCC, then $\LCT_n(I, J)$ satisfies the ACC.\\ 
\end{conjecture}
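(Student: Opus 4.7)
The plan is to mirror the Hacon--McKernan--Xu inductive scheme of \cite{HMX14}, restricted (given the available MMP technology) to the range $n\le 3$ and $\chr k>5$. Assume for contradiction that there is a strictly increasing sequence $t_i=\lct(X_i,\Delta_i;M_i)\in\LCT_n(I,J)$ with $t_i\to t_\infty$. By the definition of the lct, each pair $(X_i,\Delta_i+t_iM_i)$ is log canonical but not klt, so some divisorial valuation $E_i$ over $X_i$ has discrepancy exactly $-1$.

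First I would extract $E_i$ as a divisor via a dlt modification $\pi_i\colon Y_i\to X_i$, available for threefold pairs in $\chr p>5$ through the MMP. Writing
\[
\pi_i^*(K_{X_i}+\Delta_i+t_iM_i)=K_{Y_i}+S_i+\Gamma_i+t_iN_i,
\]
with $S_i$ a prime component of coefficient $1$ and the pair plt along $S_i$, the coefficients of $\Gamma_i$ lie in $I$ and those of $N_i$ in $J$. I would then apply Proposition \ref{pro:adjunction} (the Different) to the normalization $S_i^n\to S_i$ to obtain a log canonical pair $(S_i^n,\Theta_i)$ of dimension $n-1$. By the explicit coefficient formula stated there, every coefficient of $\Theta_i$ is of the shape $\frac{m-1}{m}+\sum_{j\ge 2}\frac{l_jd_j}{m}+\frac{l\,t_i}{m}$ with $m,l,l_j\in\mbN$ and $d_j\in I$, placing the coefficients of $\Theta_i$ inside $D(I\cup t_\infty J)$, which is a DCC set by Lemma \ref{lem:dcc-sets}. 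A strictly increasing $\{t_i\}$ then produces a strictly increasing sequence of coefficients among the $(n-1)$-dimensional lc pairs $(S_i^n,\Theta_i)$; by induction on dimension this should yield a contradiction, modulo the bootstrap described below.

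The main obstacle is that the reduction is not an induction on $n$ alone: the very sequence $\{t_i\}$ feeding the adjunction is what we aim to control. Following \cite{HMX14}, Conjecture \ref{cnj:acc} cannot be proved in isolation and must be bundled with a \emph{global ACC for numerically trivial pairs}: if $(X,\Delta)$ is lc, $K_X+\Delta\num 0$, and the coefficients of $\Delta$ lie in a DCC set, then those coefficients lie in a \emph{finite} set. These two ACC statements must be proved simultaneously by induction on dimension, each feeding the other at the inductive step. In positive characteristic the program is further hampered by the restricted availability of vanishing theorems and MMP techniques (dlt modifications, extraction of divisorial valuations, termination of flips), so at present one can only hope to carry it through for $n\le 3$ and $\chr k>5$; the perturbation argument sketched in the introduction to prove Theorem \ref{thm:main-theorem} is essentially a first application of this circle of ideas in that range.
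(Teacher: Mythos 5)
The statement you are addressing is stated in the paper as a \emph{conjecture}, and the paper does not prove it: it only records that it is known in all dimensions over algebraically closed fields of characteristic $0$ by \cite{HMX14} and in dimension at most $3$ over algebraically closed fields of characteristic $p>5$ by \cite{Bir16}, and it separately proves the special cases it actually needs over arbitrary (possibly imperfect) fields, namely dimension $1$ (Lemma \ref{lem:acc-for-curves}, a direct coefficient computation using only the DCC of $I$ and $J$, with no adjunction) and dimension $2$ (Theorem \ref{thm:acc-for-surfaces}, which indeed follows the scheme you describe: dlt model as in Theorem \ref{thm:dlt-model}, extraction of an lc place $E_i$, adjunction via Proposition \ref{pro:adjunction}, and the finiteness statement Proposition \ref{pro:trivial-acc-for-curves} for numerically trivial lc curve pairs). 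So your outline correctly identifies the standard Hacon--McKernan--Xu/Birkar strategy, and in dimension $2$ it matches what the paper does.

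However, as a proof of Conjecture \ref{cnj:acc} your proposal has a genuine gap: every nontrivial input is named rather than established. You defer (i) the existence of dlt modifications and the relevant MMP over the fields allowed in Definition \ref{def:acc-for-lct} --- note that $\LCT_n(I,J)$ is defined over \emph{arbitrary}, possibly imperfect, fields, while Theorem \ref{thm:dlt-model} in dimension $3$ requires $k$ algebraically closed of characteristic $p>5$, and your sketch never engages with the imperfect-field issues (conics with no rational points, degrees over $H^0(C,\mcO_C)$, Tanaka's excellent-surface MMP) that occupy Sections 5 and 6 of the paper; (ii) the ``global ACC for numerically trivial pairs'' in dimension $n-1$, which you yourself flag as the bootstrap that must be proved simultaneously --- this is exactly the hard content of \cite{HMX14} and \cite{Bir16}, so reducing the conjecture to it proves nothing new; and (iii) the base and the actual execution of the induction (your claim that the coefficients of $\Theta_i$ lie in $D(I\cup t_\infty J)$ also needs the care the paper takes via Lemma \ref{lem:dcc-sets}, since one must control $t_iJ$ for a varying increasing sequence $t_i$, not the limit alone). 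In short, what you have written is a faithful road map of the known strategy, not a proof; the statement remains, as in the paper, a conjecture in general, with the cases $n\le 3$ over algebraically closed fields of characteristic $p>5$ due to Birkar and the low-dimensional arbitrary-field cases supplied by the paper's own arguments.
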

This conjecture is known in every dimension $n$ over algebraically closed fields of characteristic $0$ due to \cite{HMX14}. Over an algebraically closed field of characteristic $p>5$, it is known in dimension at most $3$ due to \cite{Bir16}.\\

\begin{theorem}[dlt-Model]\label{thm:dlt-model}
	Let $(X, \Delta)$ be a log canonical pair of dimension at most $3$ defined over a field $k$. If $\dim X=3$, then we further assume that $k$ is an algebraically closed field of $\chr p>5$; otherwise we assume that $k$ is an arbitrary field.\\
	Then there exits a birational morphism $f:(Y, \Delta_Y)\to (X, \Delta)$ extracting only exceptional divisors of discrepancy $a(E; X, \Delta)=-1$ such that $\Delta_Y$ is an effective $\mbR$-divisor, $Y$ is $\mbQ$-factorial, $(Y, \Delta_Y)$ has dlt singularities, and
	\[
		K_Y+\Delta_Y=f^*(K_X+\Delta).
	\]
\end{theorem}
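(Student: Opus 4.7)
The plan is to construct $f:Y\to X$ by running a $(K_W+\Delta_W)$-MMP over $X$ starting from a log smooth modification of $(X,\Delta)$ in which every exceptional divisor is given boundary coefficient one. First I would take a log resolution $g:W\to X$ of $(X,\Delta)$. In dimension $2$ over an arbitrary field this is Lipman's theorem; in dimension $3$ it is available in any characteristic by the work of Cossart--Piltant.

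Let $E_1,\ldots,E_r$ be the $g$-exceptional prime divisors and put $a_i:=a(E_i;X,\Delta)\>-1$, where the inequality uses that $(X,\Delta)$ is log canonical. Define
\[
\Delta_W:=g_*^{-1}\Delta+\sum_{i=1}^r E_i,
\]
so that $(W,\Delta_W)$ is log smooth, hence $\mbQ$-factorial and dlt. A direct discrepancy computation yields
\[
K_W+\Delta_W = g^*(K_X+\Delta)+F,\qquad F:=\sum_{i=1}^r(1+a_i)E_i\>0,
\]
an effective $g$-exceptional $\mbR$-divisor whose support is exactly the union of the $E_i$ with $a_i>-1$.

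Next I would run a $(K_W+\Delta_W)$-MMP over $X$ with scaling of a $g$-ample divisor. In dimension $2$ the necessary contractions exist over any field by Tanaka's surface MMP; in dimension $3$ over an algebraically closed field of $\chr p>5$ the existence of flips and the termination of the MMP for $\mbQ$-factorial dlt threefolds are guaranteed by the work of Hacon--Xu, Birkar, Cascini--Tanaka--Xu, and Waldron. Because $K_W+\Delta_W\sim_{\mbR,X}F$ with $F$ effective and $g$-exceptional, this is precisely an MMP that step-by-step contracts the support of $F$: each divisorial contraction contracts a component of $F$ (by the negativity lemma and standard arguments as in Koll\'ar--Mori), so no $E_i$ with $a_i=-1$ is ever contracted. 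After finitely many steps the procedure outputs $f:Y\to X$ on which the pushforward of $F$ is $f$-nef and $f$-exceptional, hence identically zero by the negativity lemma. Setting $\Delta_Y$ to be the birational transform of $\Delta_W$, this gives $K_Y+\Delta_Y=f^*(K_X+\Delta)$, the $f$-exceptional divisors are precisely the birational transforms of the $E_i$ with $a_i=-1$, $Y$ is $\mbQ$-factorial, and $(Y,\Delta_Y)$ is dlt -- each of these properties being preserved along every step of the MMP.

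The construction is the classical dlt modification recipe, and the only serious input is the existence and termination of the MMP. In dimension $\leq 2$ this goes through over any field, while in dimension $3$ it is precisely this input that forces the $\chr p>5$ hypothesis; once the threefold MMP results cited above are invoked the proof runs as in characteristic $0$.
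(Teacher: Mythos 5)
Your construction is correct and is exactly the standard dlt-modification recipe that the paper invokes by citation: for $\dim X=3$ the statement is Birkar's Theorem 1.6 (whose proof is precisely the log-resolution-plus-relative-MMP-plus-negativity argument you give), and for $\dim X\le 2$ the paper applies Koll\'ar's Corollary 1.36 together with Tanaka's MMP for excellent surfaces. The only point worth flagging is that existence and termination of your $(K_W+\Delta_W)$-MMP over $X$ for dlt threefold pairs in characteristic $p>5$ is itself part of the nontrivial content packaged in the results you cite, so your proof and the paper's differ only in that you unwind the citation rather than quote it.
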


\begin{proof}
	When $\dim X=3$, it is Theorem 1.6 in \cite{Bir16}. When $\dim X=2$, this follows from a standard application of the MMP (see \cite[Corollary 1.36]{Kol13}) by noticing that the MMP for excellent surfaces is known, thanks to Tanaka (see \cite[Theorem 1.1]{Tan18})
	
\end{proof}

\section{Log canonical thresholds in dimension one}\label{sec:acc-for-curves}
In this section we establish various ACC-type results for curves over arbitrary fields. These results are an important part of our argument in the main technical result, Theorem \ref{thm:main-technical-result} in Section \ref{sec:main-theorem}.\\ 

First we need the following very useful lemma.
\begin{lemma}\cite[Lemma 3.2]{BCZ18}\label{lem:imperfect-curve}
Let $C$ be a regular curve over an arbitrary field $k$. If $\deg_k K_C<0$ and $\ell=H^0(C, \mcO_C)$, then $C$ is a conic over $\ell$ and $\deg_\ell K_C=-2$. Furthermore, if $\chr(\ell)>2$, then $C_{\bar{\ell}}=C\times_\ell\bar{\ell}\cong\mbP^1_{\bar{\ell}}$.\\	
\end{lemma}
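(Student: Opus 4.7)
The plan is to reduce to the field $\ell$, compute the arithmetic genus via Riemann--Roch, realize $C$ as a plane conic via the anti-canonical embedding, and then use the hypothesis $\chr(\ell)>2$ to conclude geometric smoothness.

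First I would set up the base change: since $C$ is proper (this is implicit in the hypothesis that $\deg_k K_C$ makes sense as a finite number) and regular, hence reduced and irreducible, $\ell=H^0(C,\mcO_C)$ is a finite field extension of $k$, and $C$ is a proper regular curve over $\ell$ with $H^0(C,\mcO_C)=\ell$. The standard base change formula for degrees of divisors (cf.\ Definition \ref{def:degree-on-curve}) gives $\deg_k K_C=[\ell:k]\cdot\deg_\ell K_C$, so the hypothesis $\deg_k K_C<0$ forces $\deg_\ell K_C<0$. Since $C$ is regular, it is Cohen--Macaulay and Gorenstein, so $\omega_C$ is a line bundle and Riemann--Roch holds on $C$ in the form $\chi(\mcL)=\deg_\ell\mcL+1-p_a(C)$, with $p_a(C)=\dim_\ell H^1(C,\mcO_C)\>0$. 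Applying this to $\omega_C$ and combining with Serre duality yields $\deg_\ell K_C=2p_a(C)-2$. The inequality $\deg_\ell K_C<0$ together with $p_a(C)\>0$ forces $p_a(C)=0$ and $\deg_\ell K_C=-2$, which is the first assertion.

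Next I would show $C$ is a conic, i.e.\ embeds into $\mbP^2_\ell$ as a degree-$2$ closed subscheme. By Riemann--Roch applied to $-K_C$ (of degree $2$ over $\ell$), $\dim_\ell H^0(C,-K_C)=3$. To see that $|-K_C|$ is very ample I would check via Riemann--Roch that for every length-$2$ closed subscheme $Z\subset C$ over $\ell$ one has $h^0(-K_C-Z)=h^0(-K_C)-2$; the line bundle $-K_C-Z$ has degree $0$, and the hypothesis $H^0(C,\mcO_C)=\ell$ controls $h^0$ of such bundles (it is $1$ if the bundle is trivial, else $0$). This simultaneously rules out base points and ensures separation of points and tangent directions, giving a closed immersion $C\hookrightarrow\mbP^2_\ell$ whose image is a regular, geometrically integral curve of degree $2$, i.e.\ a regular plane conic.

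Finally, for the characteristic hypothesis, the conic $C\subset\mbP^2_\ell$ is cut out by a quadratic form $Q$ in three variables over $\ell$. Because $\chr(\ell)>2$, I can diagonalize $Q$; if $Q$ were degenerate (rank $\<2$) the vanishing locus would be a union of two lines or a double line, contradicting the regularity of $C$. Hence $Q$ has rank $3$, and non-degeneracy of a symmetric bilinear form is preserved under any field extension in characteristic different from $2$. Therefore $C_{\bar\ell}$ is a smooth conic in $\mbP^2_{\bar\ell}$; as $\bar\ell$ is algebraically closed, $C_{\bar\ell}$ has a rational point and projection from this point produces the desired isomorphism $C_{\bar\ell}\cong\mbP^1_{\bar\ell}$. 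The main obstacle here is Step four: one must carefully argue that ``regular'' upgrades to ``geometrically regular'' for our conic, and this is precisely where the characteristic assumption enters — the base-change stability of non-degeneracy of $Q$ fails in $\chr=2$, where regular plane conics can genuinely degenerate upon extending scalars.
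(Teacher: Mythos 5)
This lemma is not proved in the paper at all: it is imported verbatim from \cite[Lemma 3.2]{BCZ18}, so there is no internal proof to compare against; your argument is the standard proof of that result and is essentially correct. Riemann--Roch together with Serre duality over $\ell$ gives $p_a(C)=0$ and $\deg_\ell K_C=-2$ (note that your count $h^0(-K_C)=3$ implicitly uses $h^1(-K_C)=h^0(2K_C)=0$, which you should state), the length-two-subscheme computation does show $-K_C$ is very ample over $\ell$, and the image is then an integral plane curve of degree $2$, i.e.\ a conic. Two points deserve more care. First, you assert mid-proof that the image is geometrically integral; at that stage this is unknown, and indeed geometric reducedness is precisely what can fail and is only recovered at the end from $\chr(\ell)>2$, so drop that adjective until then. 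Second, when ruling out a degenerate quadratic form you argue that the geometric picture (two lines or a double line) contradicts regularity; over an imperfect field a singular point of $C_{\bar\ell}$ by itself does not contradict regularity of $C$, so the contradiction should be located over $\ell$: in the rank-one case $V(Q)$ is already non-reduced over $\ell$, and in the rank-two case the vertex (the radical of the form) is an $\ell$-rational point of $C$ whose local ring has embedding dimension $2$, hence is not regular. With these adjustments your proof is complete and agrees in substance with the argument in \cite{BCZ18}.
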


\begin{lemma}[ACC for log canonical thresholds for curves]\label{lem:acc-for-curves}
With the notations as in Definition \ref{def:acc-for-lct}, the ACC for log canonical thresholds holds in dimension $1$ over arbitrary fields, i.e., if $I\subset [0, 1]$ and $J\subset\mbR_{\>0}$ are two DCC sets then, $\LCT_1(I, J)$ satisfies the ACC. 
\end{lemma}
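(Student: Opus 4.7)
The plan is to reduce the statement to an elementary monotonicity argument using the pointwise characterization of log canonical singularities on curves given in Remark \ref{rmk:dim-1-singularities}.

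First, I would unpack $\lct$ at the level of closed points. Given a pair $(C, \Delta) \in \mfI_1(I)$ and an effective $\mbR$-Cartier divisor $M$ with coefficients in $J$, write $\Delta = \sum_P a_P P$ and $M = \sum_P b_P P$, where the sums range over the closed points of $C$, $a_P \in I \cup \{0\}$, and $b_P \in J$. By Remark \ref{rmk:dim-1-singularities}, the pair $(C, \Delta + tM)$ is log canonical if and only if $a_P + t b_P \leq 1$ for every closed $P \in C$. Since $\Supp(M)$ is finite, this yields
\[
\lct(C, \Delta; M) \;=\; \min_{P:\, b_P > 0}\; \frac{1 - a_P}{b_P}.
\]
In particular every element of $\LCT_1(I, J)$ has the form $(1 - a)/b$ for some $a \in I \cup \{0\}$ and some $b \in J \cap (0, \infty)$.

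Second, I would deduce ACC by a direct monotonicity argument. Suppose for contradiction that there is a strictly increasing infinite sequence $t_1 < t_2 < \cdots$ in $\LCT_1(I, J)$, and write $t_n = (1 - a_n)/b_n$ with $a_n \in I \cup \{0\}$ and $b_n \in J \cap (0, \infty)$. Both $I \cup \{0\}$ and $J \cap (0, \infty)$ are DCC sets by Lemma \ref{lem:dcc-sets}(1)--(2), so neither $\{a_n\}$ nor $\{b_n\}$ can contain an infinite strictly decreasing subsequence. After passing to a subsequence twice, we may therefore assume that both $\{a_n\}$ and $\{b_n\}$ are non-decreasing. But then $1 - a_n$ is non-increasing while $b_n$ is non-decreasing and strictly positive, so $(1 - a_n)/b_n$ is non-increasing, contradicting the strict monotonicity of $\{t_n\}$.

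There is essentially no obstacle here beyond writing down the correct pointwise formula for $\lct$ on a curve; independence of the base field is immediate from Remark \ref{rmk:lct}, so one need not worry that the pairs $(C_n, \Delta_n)$ and divisors $M_n$ realizing the supposed sequence may live over different (possibly imperfect) fields $k_n$. The real use of this one-dimensional ACC will occur when it is combined with Proposition \ref{pro:adjunction} to control the coefficients produced by adjunction to components of $\lrd \Delta \rrd$ in higher-dimensional situations.
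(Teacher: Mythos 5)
Your proposal is correct and follows essentially the same route as the paper: both arguments boil down to the observation that on a regular curve the lct is attained where some coefficient of $\Delta + tM$ equals $1$, so $t = (1-a)/b$ with $a$ and $b$ in DCC sets, and then a monotone-subsequence argument rules out a strictly increasing sequence of thresholds. The only cosmetic difference is that you make the pointwise formula $\lct = \min_{P:\,b_P>0}(1-a_P)/b_P$ explicit, whereas the paper phrases the same fact as "some point of $\Supp M_i$ has coefficient $1$ in $\Delta_i + t_iM_i$."
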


\begin{remark}
	Recall that here we do not fix the base field $k$, i.e., the base field $k$ may vary as the curves $C$ vary (see Remark \ref{rmk:lct}).  
\end{remark}

\begin{proof}
	On the contrary assume that it is not true, then there exist a sequence of pairs $(C_i, \Delta_i)\in\mfI_1(I)$ and effective $\mbR$-divisors $M_i$ with coefficients in $J$ such that $t_i=\lct(C_i, \Delta_i; M_i)$ is a strictly increasing sequence. Since $C_i$ is a regular curve and $t_i$ is a log canonical threshold, it follows that one of the points in the support of $M_i$ has coefficient $1$ in $\Delta_i+t_iM_i$. Let $\Delta_i=\sum a_{ij}x_{ij}$ and $M_i=\sum b_{ij}y_{ij}$, where $x_{ij}, y_{ij}$ are closed points of $C_i$. Without loss of generality we may assume that $a_{i1}+t_ib_{i1}=1$ for all $i\>1$. Since $\{a_{i1}\}$ and $\{b_{i1}\}$ are both contained in DCC sets, replacing them by a subsequence with common indices we may assume that they are both monotonically increasing sequence. Then $t_ib_{i1}=1-a_{i1}$ is monotonically decreasing. Since $\{t_i\}$ is strictly increasing it follows that $\{t_ib_{i1}\}$ is strictly increasing, hence a contradiction.\\  
\end{proof}

\begin{proposition}\label{pro:trivial-acc-for-curves}
	Fix a DCC set $I\subset [0, 1]$. Then there is a finite subset $I_0\subset I$ with the following properties:\\
	If $(C, \Delta)$ is a log pair such that
	\begin{enumerate}
		\item $C$ is a regular curve over some arbitrary field $k$,
		\item the coefficients of $\Delta$ belong to $I$, and
		\item $K_C+\Delta\num 0$,
	\end{enumerate}
	then the coefficients of $\Delta$ belong to $I_0$.
\end{proposition}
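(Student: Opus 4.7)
The plan is to translate $K_C + \Delta \num 0$ on a regular curve into a finite linear equation in the coefficients of $\Delta$, and then extract finiteness of the possible coefficient values from the DCC of $I$. The crucial structural input is Lemma \ref{lem:imperfect-curve}, which identifies $C$ as a conic over its field of constants whenever $\deg_k K_C < 0$.

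First I would split on the sign of $\deg_k K_C$. If $\deg_k K_C \> 0$, then $\deg_k \Delta = -\deg_k K_C \< 0$ combined with $\Delta \> 0$ forces $\Delta = 0$, and only the coefficient $0$ appears. Otherwise Lemma \ref{lem:imperfect-curve} gives $\ell := H^0(C, \mcO_C)$ with $\deg_\ell K_C = -2$, and since $[k(P):k] = [k(P):\ell] \cdot [\ell:k]$ for closed points $P$, dividing the degree equation by $[\ell:k]$ yields
\[
	\sum_{i=1}^s n_i a_i = 2, \qquad n_i := [k(P_i):\ell] \in \mbZ_{\>1}, \quad a_i \in I \cap (0,1],
\]
where $\Delta = \sum_{i=1}^s a_i P_i$ lists the nonzero coefficients. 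Note that the integer coefficients $n_i$ live in this equation independently of the base field $k$.

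Next, since $I$ is DCC, there is a smallest positive element $\alpha_0 > 0$, so every $a_i \> \alpha_0$ and hence $n_i, s \< 2/\alpha_0$. Let $I_0 \subset I$ be the union of $\{0\}$ with the set of all coefficient values appearing across all admissible pairs $(C, \Delta)$; I claim $I_0$ is finite. Otherwise, as an infinite DCC subset of $[0,1]$, $I_0$ admits a strictly increasing sequence $b_m \nearrow b^*$ (DCC precludes accumulation from above, since any strictly decreasing sub-subsequence would violate the DCC). Realize each $b_m$ as the first coefficient $a_{m,1}$ in some equation of the above form. Since $s_m$ and each $n_{m,i}$ lie in finite ranges, and since each sequence $\{a_{m,i}\}_m \subset I$ is DCC so (after passage to a subsequence) is weakly increasing in $m$, a diagonal extraction yields $s_m = s$ and $n_{m,i} = n_i$ constant, with each $\{a_{m,i}\}_m$ weakly increasing. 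But then $n_1 a_{m,1}$ is strictly increasing in $m$ while $\sum_{i \> 2} n_i a_{m,i}$ is weakly increasing, contradicting $\sum_{i=1}^s n_i a_{m,i} = 2$.

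I expect the main obstacle to be purely bookkeeping --- threading the diagonal subsequence extraction cleanly. Conceptually the argument is entirely standard: Lemma \ref{lem:imperfect-curve} renders the controlling equation $\sum n_i a_i = 2$ independent of the base field $k$, which reduces the proposition to a uniform DCC compactness argument of the same flavor as the proof of Lemma \ref{lem:dcc-sets}(4).
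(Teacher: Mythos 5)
Your proposal is correct and follows essentially the same route as the paper: both reduce via Lemma \ref{lem:imperfect-curve} to the equation $\sum_i n_i a_i = 2$ with $n_i\in\mbZ_{\>1}$ and then derive finiteness from a DCC subsequence-extraction contradiction (the paper bounds the multiplicities and the number of components by a sub-contradiction and invokes Lemma \ref{lem:dcc-sets}(5), while you bound them directly by the smallest positive element of $I$ — a cosmetic difference). Your explicit treatment of the degenerate case $\deg_k K_C\>0$, i.e.\ $\Delta=0$, is a small but welcome addition that the paper leaves implicit.
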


\begin{remark}
	Here we do not fix the base-field $k$, i.e., the base field $k$ may vary as $C$ varies.
\end{remark}

\begin{proof}
First we note that $(C, \Delta)$ is log canonical, since $C$ is a curve and $\Delta$ is a boundary divisor.	 Now it is enough to show that the coefficients of $\Delta$ belong to an ACC set. If not then assume that there is a strictly increasing sequence of coefficients, say
	\begin{equation}\label{eqn:not-acc-sequence}
		a_{11}<a_{21}<\cdots<a_{i1}<\cdots
	\end{equation}
	 where $\Delta_i=\sum_{j} a_{ij}x_{ij}$ and $(C_i, \Delta_i)$ is a pair as in the hypothesis.\\
	 
Suppose that $C_i$ is defined over the field $k_i$, and let $H^0(C_i, \mcO_{C_i})=\ell_i$. Then $\ell_i$ is a finite extension of $k_i$. From Lemma \ref{lem:imperfect-curve} it follows that $C_i$ is a conic over $\ell_i$ and $\deg_{\ell_i}K_{C_i}=-2$. Therefore $K_{C_i}+\Delta_i\num 0$ implies that $\deg_{\ell_i}(K_{C_i}+\Delta_i)=\deg_{k_i}(K_{C_i}+\Delta_i)/[\ell_i:k_i]=0$; in particular we have
\begin{equation}\label{eqn:main-acc-equation}
	-2+\sum_j n_{ij}a_{ij}=0,
\end{equation}
where $n_{ij}>0$ are positive integers (see Definition \ref{def:degree-on-curve}).\\
Then $n_{i1}a_{i1}+\sum_{j\>2}n_{ij}a_{ij}=2$. We claim that $\{n_{i1}:i\>1\}$ is a bounded set. Indeed, if not then there is an unbounded subsequence $\{n_{i_k1}\}_{k\>1}$. Since $\{a_{i1}\}_{i\>1}$ is contained in a DCC set, it has a non zero minimum, say $\min\{a_{i1}:i\>1\}=a>0$. Then for $k\gg 0$ we have $n_{i_k1}a_{i_k1}\>n_{i_k1}a>2$, which contradicts equation \eqref{eqn:main-acc-equation}.\\

\begin{claim}\label{clm:sum-dcc}
	$\left\{\sum_{j\>2}n_{ij}a_{ij}\right\}_{i\>1}$ is a DCC set.
\end{claim}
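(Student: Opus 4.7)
The plan is to use the balance equation \eqref{eqn:main-acc-equation} together with the DCC property of $I$ to express each $\sum_{j \geq 2} n_{ij} a_{ij}$ as an element of $\Span_N(I)$ for a single integer $N$ depending only on $I$, and then invoke Lemma \ref{lem:dcc-sets}(5).

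First I would set $a := \inf(I \setminus \{0\})$. Since $I \subseteq [0,1]$ is a DCC set, either $I \setminus \{0\} = \emptyset$ (in which case $\Delta_i = 0$ for all $i$ and the claim is trivial), or the infimum is attained as a minimum and satisfies $a > 0$. Starting from $\sum_j n_{ij} a_{ij} = 2$ in \eqref{eqn:main-acc-equation}, I would then observe that for every $j$ with $a_{ij} \neq 0$ we have $a_{ij} \geq a$, so
\[
    \left( \sum_{j:\, a_{ij} > 0} n_{ij} \right) a \;\leq\; \sum_{j} n_{ij} a_{ij} \;=\; 2.
\]
Hence $\sum_{j:\, a_{ij} > 0} n_{ij} \leq 2/a$. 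Setting $N := \lfloor 2/a \rfloor$, this integer depends only on $I$ and simultaneously bounds, uniformly in $i$, both the number of nonzero terms $a_{ij}$ and each multiplicity $n_{ij}$.

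With these uniform bounds in hand, each sum $\sum_{j \geq 2} n_{ij} a_{ij}$ can be rewritten as a sum of at most $N$ (not necessarily distinct) elements of $I$, where each $a_{ij}$ is repeated $n_{ij}$ times. Consequently, every element of the set $\left\{ \sum_{j \geq 2} n_{ij} a_{ij} \right\}_{i \geq 1}$ lies in $\Span_N(I)$, which is a DCC set by Lemma \ref{lem:dcc-sets}(5). Since any subset of a DCC set is again DCC by Lemma \ref{lem:dcc-sets}(1), the claim follows.

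There is no serious obstacle here; the structural content of the argument is carried by Lemma \ref{lem:dcc-sets}(5), and the one combinatorial input needed is that the numerical balance $\sum_j n_{ij} a_{ij} = 2$ forces a uniform $N$ controlling how many copies of elements of $I$ can appear in the relevant expressions, independently of the index $i$.
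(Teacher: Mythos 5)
Your proof is correct and follows essentially the same route as the paper: both arguments use the minimal positive element $a$ of the (DCC) coefficient set together with the relation $\sum_j n_{ij}a_{ij}=2$ to bound the number of summands uniformly, and then conclude via Lemma \ref{lem:dcc-sets}(5) (and (1)). The only cosmetic difference is that you extract the uniform bound $\sum_{j}n_{ij}\leq 2/a$ directly from the degree equation, whereas the paper first invokes the boundedness of $\{n_{i1}\}$ and of the partial sums and then bounds the number of components of $\Delta_i$; the substance is the same.
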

\begin{proof}[Proof of the Claim]
    Since $\{n_{i1}\}$ is a bounded sequence, from \eqref{eqn:main-acc-equation} it follows that $\sum_{j\>2}n_{ij}a_{ij}$ is also a bounded sequence. Note that if we can show that the number of prime components of the divisors $\Delta_i$ is bounded, then the claim will follow from Lemma \ref{lem:dcc-sets} part (5). To that end let $N_i$ be the number of prime components of $\Delta_i$ for all $i$. Let $a>0$ be the minimum of the set $\{a_{ij}: i\>1, j\>2\}$. Then $\sum_{j\>2}n_{ij}a_{ij}\>N_i a>0$, since $n_{ij}\>1$ for all $i, j$. Now if $\{N_i\}$ is unbounded, then $\{N_ia\}$ is unbounded, which contradicts the boundedness of $\sum_{j\>2}n_{ij}a_{ij}$. In particular, the claim follows from Lemma \ref{lem:dcc-sets} part (5) and (1).  
	
\end{proof}

\end{proof}

\begin{theorem}\label{thm:theta-equality-in-dim-1}
Fix a DCC set $I\subset\mbR_{\>0}$. Then there exists $0<\ve<1$ with the following properties: if $(C, \Theta)$ and $(C, \Theta')$ are two log pairs of dimension $1$  defined over some arbitrary field $k$ such that the coefficients of $\Theta$ belong to $I$, and 
\[
	(1-\ve)\Theta\<\Theta'\<\Theta,
\]
then $(C, \Theta)$ is log canonical if and only if $(C, \Theta')$ is log canonical. Moreover, if $(C, \Theta')$ is log canonical and $K_C+\Theta'\num 0$, then $\Theta'=\Theta$.
\end{theorem}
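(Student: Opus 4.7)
I will treat the two assertions separately. Write $\Theta=\sum a_i x_i$ on the regular curve $C$; since $(1-\ve)\Theta\leq\Theta'\leq\Theta$ with $\ve<1$ forces the two divisors to share supports, we may write $\Theta'=\sum a'_i x_i$ with $(1-\ve)a_i\leq a'_i\leq a_i$. By Remark~\ref{rmk:dim-1-singularities}, log canonicity on $C$ is equivalent to all coefficients being $\leq 1$. The implication $(C,\Theta)$ lc $\Rightarrow(C,\Theta')$ lc is then immediate from $a'_i\leq a_i$. For the converse, if $I\cap(1,\infty)=\emptyset$ any $\ve\in(0,1)$ works; otherwise the DCC property of $I$ gives $\alpha:=\min(I\cap(1,\infty))>1$, and I pick $\ve<(\alpha-1)/\alpha$ so that $(1-\ve)\alpha>1$. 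If some $a_i$ exceeds $1$ then $a_i\geq\alpha$, forcing $a'_i\geq(1-\ve)\alpha>1$ and contradicting log canonicity of $(C,\Theta')$. This yields the biconditional for the chosen $\ve$.

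For the \emph{moreover} statement I argue by contradiction, shrinking $\ve$ further if necessary. Suppose no $\ve$ works and take $\ve_m=1/m$ with pairs $(C_m,\Theta_m,\Theta'_m)$ satisfying the hypotheses, with $(C_m,\Theta'_m)$ lc, $K_{C_m}+\Theta'_m\num 0$, and $\Theta'_m\neq\Theta_m$. The case $\Theta_m=0$ is trivial, so $\Theta_m>0$, whence $\Theta'_m\geq(1-\ve_m)\Theta_m>0$ and $\deg K_{C_m}<0$. Lemma~\ref{lem:imperfect-curve} applied to $\ell_m:=H^0(C_m,\mcO_{C_m})$ yields $\deg_{\ell_m}K_{C_m}=-2$, and $\deg_{\ell_m}(K_{C_m}+\Theta'_m)=0$ becomes
\[
	\sum_i n_{mi}\,a'_{mi}=2,\qquad n_{mi}:=[k(x_{mi}):\ell_m]\geq 1,
\]
exactly as in the proof of Proposition~\ref{pro:trivial-acc-for-curves}. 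By the already-fixed $\ve$ from the first part we may assume $a_{mi}\in I\cap[0,1]$. Setting $c:=\min(I\setminus\{0\})>0$ and using $a'_{mi}\geq(1-\ve_m)c$, both $N_m$ (the number of components) and each $n_{mi}$ are bounded uniformly in $m$, so after passing to a subsequence we have $N_m=N$ and $n_{mi}=n_i$ constant. Since $I\cap[0,1]$ is DCC, every sequence in it admits a non-decreasing subsequence; extracting once per $i\in\{1,\dots,N\}$, I may further assume each $\{a_{mi}\}_m$ is monotonically non-decreasing in $m$.

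Now the punchline. Since $\Theta'_m\neq\Theta_m$ with $a'_{mi}\leq a_{mi}$ and strict inequality for some $i$, we have $\sum_i n_i a_{1i}>\sum_i n_i a'_{1i}=2$; set $\eta:=\sum_i n_i a_{1i}-2>0$. Monotone non-decrease forces $\sum_i n_i a_{mi}\geq 2+\eta$ for every $m\geq 1$. On the other hand $(1-\ve_m)\sum_i n_i a_{mi}\leq\sum_i n_i a'_{mi}=2$ gives the upper bound $\sum_i n_i a_{mi}\leq 2/(1-\ve_m)$, which is strictly less than $2+\eta$ for all sufficiently large $m$. This contradiction completes the proof. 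The delicate step is the subsequence bookkeeping --- one must stabilize $N_m$, stabilize each $n_{mi}$, and guarantee simultaneous monotonic non-decrease of every coordinate sequence $\{a_{mi}\}_m$ --- after which the two-sided squeeze of $\sum_i n_i a_{mi}$ between the fixed lower bound $2+\eta$ and the upper bound $2/(1-\ve_m)\to 2$ delivers the contradiction automatically.
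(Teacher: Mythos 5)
Your proposal is correct, and while its overall skeleton (fix $\ve$ from the DCC hypothesis for the biconditional, then prove the ``moreover'' part by contradiction along a sequence $\ve_m\to 0$ using the conic computation of Lemma~\ref{lem:imperfect-curve}) parallels the paper, the two halves are carried out by genuinely different means. For the biconditional the paper invokes the ACC for log canonical thresholds in dimension one (Lemma~\ref{lem:acc-for-curves}) and argues with $\lct((C,0);\Theta)$, whereas you use only the coefficient criterion for log canonicity on a regular curve together with the elementary fact that a DCC set attains the minimum $\alpha=\min\bigl(I\cap(1,\infty)\bigr)$, choosing $\ve<(\alpha-1)/\alpha$; this is more elementary and bypasses Lemma~\ref{lem:acc-for-curves} entirely (though that lemma is still needed elsewhere in the paper, so the saving is local). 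For the ``moreover'' part the paper first proves Claim~\ref{clm:dcc-coefficients} (that the coefficients of the $\Theta'_i$ lie in a fixed DCC set), then applies Proposition~\ref{pro:trivial-acc-for-curves} to put them in a finite set, and finishes with a per-coefficient limit comparison; you instead work directly with the degree identity $\sum_i n_{mi}a'_{mi}=2$ from Lemma~\ref{lem:imperfect-curve}, bound the number of components and the degrees $n_{mi}$ by the positive minimum $c$ of $I\setminus\{0\}$, stabilize these data along a subsequence, and squeeze $\sum_i n_i a_{mi}$ between the fixed gap $2+\eta$ (coming from $\Theta'_1\neq\Theta_1$) and $2/(1-\ve_m)\to 2$. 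This avoids both the DCC claim for the $\Theta'$-coefficients and Proposition~\ref{pro:trivial-acc-for-curves}, at the cost of redoing the boundedness bookkeeping that the paper delegates to that proposition. Two small points to tidy: Remark~\ref{rmk:dim-1-singularities} is stated for boundary divisors, so you should note (or verify directly via discrepancies) that the criterion ``lc iff all coefficients $\leq 1$'' holds for an arbitrary effective divisor on a regular curve, which is precisely the case $I\not\subset[0,1]$ where you need it; and when you invoke the already-fixed $\ve$ to conclude $a_{mi}\leq 1$, you should discard the finitely many $m$ with $\ve_m>\ve$ (harmless, and in fact only the DCC property of $I$, not the bound $a_{mi}\leq 1$, is used in the rest of your argument).
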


\begin{remark}
	Here we do not fix the base field $k$, i.e., the base field $k$ may vary as $C$ varies.
\end{remark}
\begin{proof}
	By Lemma \ref{lem:acc-for-curves} there exists an $\ve>0$ such that no elements of the set $\LCT_1(\{0\},I)$ is contained in the interval $[1-\ve, 1)$. Let $(C, \Theta)$ and $(C, \Theta')$ be two log pairs with coefficients of $\Theta$ in $I$ and $(1-\ve)\Theta\<\Theta'\<\Theta$. Clearly if $(C, \Theta)$ is log canonical, then so is $(C, \Theta')$. So assume that $(C, \Theta')$ is log canonical. Then $(C, (1-\ve)\Theta)$ is log canonical.
	Therefore $\lct((C, 0); \Theta)\>1-\ve$. Since the interval $[1-\ve, 1)$ does not contain any log canonical threshold, $\lct((C, 0); \Theta)\>1$; in particular $(C, \Theta)$ is log canonical.\\

For the second part by contradiction assume that the conclusion is false. Then there is a strictly decreasing sequence $\{\ve_i>0\}$ with $\lim \ve_i=0$ which satisfies the following properties: For each $i\>1$, there are log canonical pairs $(C_i, \Theta_i)$ and $(C_i, \Theta_i')$ such that the coefficients of $\Theta_i$ belong to $I$ and $K_{C_i}+\Theta_i'\num 0$, but
\begin{equation}\label{eqn:theta-equality}
	(1-\ve_i)\Theta_i\<\Theta_i'<\Theta_i.
\end{equation} 	
Write $\Theta_i=\sum d_{ij}D_{ij}$ and $\Theta_i'=\sum d'_{ij}D_{ij}$.\\

Next we make the following claim.
\begin{claim}\label{clm:dcc-coefficients}
	The coefficients of all $\Theta_i'$ are contained in a fixed DCC set. 
\end{claim}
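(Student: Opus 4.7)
The plan is to argue Claim \ref{clm:dcc-coefficients} by contradiction via a direct limit argument: a strictly decreasing sequence among the coefficients $d_{ij}'$ would be sandwiched between two quantities both converging to the same positive value $d^*$, which is incompatible with strict monotone convergence.

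First I would suppose for contradiction that $\{d_{ij}'\}$ fails DCC and extract an infinite strictly decreasing subsequence, writing $d_k' := d_{i_k,j_k}'$ and $d_k := d_{i_k,j_k}$, so that by \eqref{eqn:theta-equality}
\[
	(1-\ve_{i_k})\,d_k \< d_k' \< d_k, \qquad d_k \in I.
\]
Because each divisor $\Theta_i'$ has only finitely many components, the indices $i_k$ cannot be bounded, so after passing to a subsequence I may assume $i_k \to \infty$, which gives $\ve_{i_k} \to 0$. Since $\{d_k\} \subset I$ is a subset of a DCC set (Lemma \ref{lem:dcc-sets}(1)), a further subsequence makes $\{d_k\}$ monotonically non-decreasing; let $d^* := \lim_k d_k$. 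Note $d^* \> d_1 > 0$ because each $d_k$ is the coefficient of an actual prime component.

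Next I would push the sandwich inequalities to the limit. Since $\ve_{i_k} \to 0$ and $d_k \to d^*$, both $(1-\ve_{i_k})d_k$ and $d_k$ converge to $d^*$, forcing $d_k' \to d^*$. On the other hand, $d_k' \< d_k \< d^*$ for every $k$, while $\{d_k'\}$ is strictly decreasing and bounded below, hence convergent with its limit equal to its infimum and every term strictly exceeding that infimum. Therefore $d_k' > \lim d_k' = d^*$ for every $k$, which contradicts $d_k' \< d^*$. This proves the claim.

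I do not foresee any real obstacle here: the one point to watch is that the sandwich inequality forces $d_k' \to d^*$ \emph{from below} (since $d_k' \< d^*$), whereas a strictly decreasing sequence can only approach its limit \emph{from above}. In particular, the degree equation $\sum_j n_{ij} d_{ij}' = 2$ coming from $K_{C_i}+\Theta_i' \num 0$ via Lemma \ref{lem:imperfect-curve} is not needed for this claim; that relation, combined with Proposition \ref{pro:trivial-acc-for-curves} applied to the DCC coefficient set just produced, will presumably enter the subsequent steps of the proof of Theorem \ref{thm:theta-equality-in-dim-1} to derive the contradiction with $\Theta_i' < \Theta_i$.
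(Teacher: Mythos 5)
Your proof is correct and follows essentially the same route as the paper: assume the DCC fails, extract a strictly decreasing sequence of $\Theta'$-coefficients, use the DCC on $I$ to make the corresponding $\Theta$-coefficients monotone non-decreasing, and let the sandwich $(1-\ve_i)d_i\<d_i'\<d_i$ with $\ve_i\to 0$ force the limits to coincide, contradicting strict decrease. Your extra observation that the indices must be unbounded (so $\ve_{i_k}\to 0$) just makes explicit a relabeling step the paper leaves implicit, and you are also right that the degree relation and Proposition \ref{pro:trivial-acc-for-curves} enter only after the claim, not in its proof.
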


Assuming the claim for the time being we will complete the proof first.\\
Let $J\subset [0, 1]$ be a DCC set containing the coefficients of $\Theta_i'$ for all $i$. Then by Proposition \ref{pro:trivial-acc-for-curves} the coefficients of $\Theta_i'$ are contained in a finite subset $J_0\subset J$. Relabeling the indices of $D_{ij}$ if necessary we may assume from \eqref{eqn:theta-equality} that 
\begin{equation}\label{eqn:theta-inequality}
	(1-\ve_i)d_{i1}\<d_{i1}'<d_{i1}\quad\mbox{ for all } i\>1.
\end{equation}

Now by passing to subsequences with common indices we may assume that $\{d_{i1}\}$ is monotonically increasing and $\{d_{i1}'\}$ is a constant sequence. Let $\lim d_{i1}=d$. Then from \eqref{eqn:theta-inequality} we get 
\[
	\lim(1-\ve_i)d_{i1}\<\lim d_{i1}'\<\lim d_{i1}\ \implies d\<d_{11}'\<d,\ \mbox{ i.e., } d=d_{11}'.	
\]
Then we have $d=\lim d_{i1}\>d_{11}>d_{11}'=d$, a contradiction.\\

\begin{proof}[Proof of the Claim \ref{clm:dcc-coefficients}]
To the contrary assume that the coefficients of $\Theta_i'$ are not contained in a fixed DCC set. Thus by relabeling the indices if necessary we may assume that $\{d_{i1}': i\>1\}$ is not contained in a DCC set. Then passing to subsequences with common indices we may assume that $\{d_{i1}'\}$ is strictly decreasing and $\{d_{i1}\}$ is monotonically increasing. Note that in this case we can only say that $(1-\ve_i)d_{i1}\<d_{i1}'\<d_{i1}$; strict inequality may not hold here. Let $\lim d_{i1}=d$ and $\lim d_{i1}'=d'$. Then $d_{i1}\<d$ and $d_{i1}'>d'$ for all $i\>1$. Thus $d\>d_{i1}\>d_{i1}'>d'=\lim d_{i1}'\>\lim (1-\ve_i)d_{i1}=d$, a contradiction.  
\end{proof}

\end{proof}

\section{Log canonical thresholds in dimension two and three}\label{sec:acc-for-surfaces-and-3-folds}
In this section we prove some results on the log canonical thresholds in dimension two and three and some other ACC-type results in positive characteristic over arbitrary fields.

\begin{theorem}\label{thm:acc-for-surfaces}
With notations as in Definition \ref{def:acc-for-lct}, the ACC for log canonical thresholds hold in dimension $2$ over arbitrary fields, i.e., if $I\subset [0, 1]$ and $J\subset\mbR_{\>0}$ satisfies the DCC, then  $\LCT_2(I, J)$ satisfies the ACC.\\
\end{theorem}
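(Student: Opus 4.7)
The plan is to argue by contradiction, in the spirit of the inductive strategy of \cite{HMX14}. Suppose there exist log canonical pairs $(X_i,\Delta_i)\in\mfI_2(I)$ over possibly varying arbitrary fields and effective $\mbR$-Cartier divisors $M_i$ on $X_i$ with coefficients in $J$ such that $t_i:=\lct(X_i,\Delta_i;M_i)$ is a strictly increasing sequence. Any coefficient $b\in J\setminus\{0\}$ appearing in some $M_i$ forces $t_ib\<1$ by log canonicity of $(X_i,\Delta_i+t_iM_i)$, so the $t_i$ are uniformly bounded above, and hence $\{t_i\}$ is itself a DCC set; by Lemma \ref{lem:dcc-sets} the set $J':=\{t_ij:i\>1,\ j\in J\}$ is DCC, and the coefficients of $\Delta_i+t_iM_i$ lie in the DCC set $I\cup J'$. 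Apply Theorem \ref{thm:dlt-model}, which holds in dimension two over arbitrary fields by Tanaka's surface MMP, to produce a dlt modification $f_i:Y_i\to X_i$ with $K_{Y_i}+\Delta_{Y_i}=f_i^*(K_{X_i}+\Delta_i+t_iM_i)$, $(Y_i,\Delta_{Y_i})$ dlt, $Y_i$ $\mbQ$-factorial, and every $f_i$-exceptional divisor of discrepancy $-1$. Because $(X_i,\Delta_i+t_iM_i)$ is lc but not klt, $\lfloor\Delta_{Y_i}\rfloor$ contains a component $S_i$. If $S_i$ is the strict transform of a prime component of $M_i$ with coefficient $m_i\in J$, then $t_im_i=1$, so $\{t_i\}$ strictly increasing would force $\{m_i\}$ strictly decreasing in the DCC set $J$, a contradiction. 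After passing to a subsequence, $S_i$ is either the strict transform of a component of $\lfloor\Delta_i\rfloor$ or an $f_i$-exceptional divisor.

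By Proposition \ref{pro:adjunction} and Lemma \ref{lem:easy-adjunction} applied to $S_i\subset Y_i$, the different produces an effective $\mbR$-divisor $\Theta_i$ on the regular curve $S_i^n$ such that $(S_i^n,\Theta_i)$ is log canonical, with coefficients of $\Theta_i$ in the DCC set $D(I\cup J'\cup\{1\})$. The key step is now to realize $t_i$ itself as a log canonical threshold on $S_i^n$, so that ACC in dimension one (Lemma \ref{lem:acc-for-curves}) delivers the contradiction. Since $\Delta_{Y_i}(t):=f_i^*(K_{X_i}+\Delta_i+tM_i)-K_{Y_i}$ depends affinely on $t$, the linearity of the different formula in Proposition \ref{pro:adjunction} yields a decomposition $\Theta_i(t)=\Theta_i^{(0)}+tN_i$ on $S_i^n$ with $\Theta_i^{(0)}\>0$ and $N_i\>0$. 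Because $(X_i,\Delta_i+tM_i)$ is klt for $t<t_i$ and lc but not klt at $t=t_i$, easy adjunction identifies $t_i=\lct(S_i^n,\Theta_i^{(0)};N_i)$; Lemma \ref{lem:acc-for-curves} then forces $\{t_i\}$ to satisfy the ACC, contradicting strict monotonicity.

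The hard part is the DCC verification for $\Theta_i^{(0)}$ and $N_i$: the coefficients of $N_i$ are built from the integers $m,l_\alpha$ of Proposition \ref{pro:adjunction} combined with the multiplicities of $f_i^*M_i$ along components through $S_i$, and these multiplicities are a priori unbounded rationals depending on the chosen model $f_i$. A further subtlety arises when $S_i$ is $f_i$-exceptional: the coefficient of $S_i$ itself in $\Delta_{Y_i}(t)$ then depends affinely on $t$, and the adjunction must be performed either on an auxiliary model where $S_i$ has coefficient exactly $1$ throughout, or by absorbing this $t$-dependence into the decomposition $\Theta_i(t)=\Theta_i^{(0)}+tN_i$. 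These points require a careful local analysis of the dlt model along $S_i$, in the spirit of the curve-case arguments of Section \ref{sec:acc-for-curves}, and this is where the bulk of the technical work will lie.
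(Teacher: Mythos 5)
Your setup (contradiction, dlt model via Tanaka's surface MMP, adjunction to a coefficient-one curve, reduction to dimension one) follows the same broad outline as the paper, but your endgame is different and it contains the genuine gap. You want to realize $t_i$ itself as a log canonical threshold on the curve, writing $\Theta_i(t)=\Theta_i^{(0)}+tN_i$ and claiming ``easy adjunction identifies $t_i=\lct(S_i^n,\Theta_i^{(0)};N_i)$.'' Easy adjunction (Lemma \ref{lem:easy-adjunction}) only goes one way: log canonicity of the pair upstairs gives log canonicity of $(S_i^n,\Theta_i(t_i))$. To conclude that $(S_i^n,\Theta_i^{(0)}+tN_i)$ fails to be log canonical for $t>t_i$ you need an inversion-of-adjunction type statement, together with a careful choice of $S_i$: if the non-klt place computing the threshold does not lie on $S_i$, or if $\mult_{S_i}f_i^*M_i=0$, then $N_i=0$ and the restricted pair sees nothing of $t_i$. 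Neither inversion of adjunction over arbitrary (in particular imperfect) fields nor the required choice of $S_i$ is established in your argument, and the paper deliberately does not rely on such a statement. Moreover, even granting the identification, Lemma \ref{lem:acc-for-curves} requires the coefficients of $\Theta_i^{(0)}$ and $N_i$ to lie in fixed DCC sets; you explicitly concede that these coefficients involve multiplicities of $f_i^*M_i$ and the integers $m,l_\alpha$ of Proposition \ref{pro:adjunction} that you cannot control, and you defer exactly this point (``this is where the bulk of the technical work will lie''). Since that is the heart of the matter, the proposal as it stands is a plan with the decisive steps missing. A smaller slip: when $S_i$ is the strict transform of a component of $M_i$, the coefficient equation is $a_i+t_ib_i=1$ with $a_i\in I$, $b_i\in J$ (the component may also appear in $\Delta_i$), and one needs the monotone-subsequence argument, not simply $t_ib_i=1$.

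The paper's proof avoids all of this by splitting according to the dimension of the log canonical centre. If a one-dimensional centre lies in $\Supp M_i$, one coefficient of $\Delta_i+t_iM_i$ equals $1$ and a direct DCC computation gives the contradiction. Otherwise there is a zero-dimensional centre $P_i$; on the dlt model one takes an exceptional curve $E_i$ of discrepancy $-1$ over $P_i$ meeting the strict transform of $M_i$, and the crucial observation is that $K_{E_i}+\Gamma_{E_i}$ is \emph{numerically trivial} (degree zero over $H^0(E_i,\mcO_{E_i})$), so that the \emph{global} finiteness statement for numerically trivial curve pairs (Proposition \ref{pro:trivial-acc-for-curves}) applies: the coefficients of $\Gamma_{E_i}$ lie in a finite set. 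One then solves the explicit different formula $1-\tfrac{1}{m_i}+t_i\tfrac{\sum_j n_{ij}b_{ij}}{m_i}=b$ for $t_i$, showing $t_i\cdot(\mbox{a DCC quantity})$ is eventually constant, whence $\{t_i\}$ satisfies the ACC --- a contradiction that never requires interpreting $t_i$ as a threshold on the curve, nor any inversion of adjunction. If you want to salvage your route you would have to prove the inversion statement and the DCC control you postponed; the paper's mechanism is the standard way to sidestep both.
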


\begin{remark}
	Recall that here do not fix the base field $k$, i.e., the base field $k$ may vary as the surfaces vary (see Remark \ref{rmk:lct}).
\end{remark}

\begin{proof}
	This theorem is proved for surfaces defined over algebraically closed fields in \cite[Proposition 11.2]{Bir16}. In what follows we show that a similar argument works over arbitrary fields.\\
	
	By contradiction assume that there is a sequence of log canonical pairs $(X_i, \Delta_i)$ of dimension $2$ and effective $\mbR$-Cartier divisors $M_i\>0$ with coefficients of $\Delta_i$ in $I$ and the coefficients of $M_i$ in $J$ such that $t_i:=\lct(M_i; (X_i, \Delta_i))$ forms a strictly increasing sequence of real numbers.
	Now there are two cases based on the dimension of the log canonical centers.\\
	
	\textbf{Case I:} For infinitely many $i$, $(X_i, \Delta_i+t_iM_i)$ has a log canonical center of dimension $1$ contained in the $\Supp M_i$. In this case by passing to an infinite subsequence we may assume that for all $i\>1$, $(X_i, \Delta_i+t_iM_i)$ has a log canonical center of dimension $1$ contained in the $\Supp M_i$. In particular, one of the coefficients of $\Delta_i+t_iM_i$ is $1$ for every $i$. Let $\Delta=\sum a_{ij}D_{ij}$ and $M_i=\sum b_{ij}E_{ij}$. Without loss of generality assume that $a_{i1}+t_ib_{i1}=1$. Since $\{a_{i1}\}$ and $\{b_{i1}\}$ are contained in DCC sets, by passing to subsequences with common indices we may assume that $\{a_{i1}\}$ and $\{b_{i1}\}$ are both monotonically increasing. Then from $t_ib_{i1}=1-a_{i1}$ we see that the left hand side is strictly increasing sequence, while the right hand side is a monotonically decreasing sequence, hence a contradiction.\\  
	
	\textbf{Case II:} For infinitely many $i$, $(X_i, \Delta_i+t_iM_i)$ has a log canonical center $P_i$ of dimension $0$ contained in the support of $M_i$. By passing to an infinite subsequence we may assume that for all $i\>1$, $(X_i, \Delta_i+t_iM_i)$ has a log canonical center $P_i$ of dimension $0$ contained in the support of $M_i$. Let $f_i:Y_i\to X_i$ be a dlt-model of $(X_i, \Delta_i+t_iM_i)$ (see Theorem \ref{thm:dlt-model}). Then there is an exceptional divisor $E_i$ of discrepancy $-1$ which intersects the strict transform of $M_i$ and $f_i(E_i)=P_i$. Write
	\[
		K_{Y_i}+E_i+\Gamma_i=f^*_i(K_{X_i}+\Delta_i+t_iM_i).
	\]
Since the coefficients of $M_i$ are in a DCC set and $t_i$ is a strictly increasing sequence, it is easy to see that the coefficients of $t_iM_i$ are contained in a DCC set. Thus the coefficients of $\Delta_i+t_iM_i$ are in a DCC set by Lemma \ref{lem:dcc-sets}, part (4); in particular the coefficients of $\Gamma_i+E_i$ are in a DCC set. Since $(Y_i, \Gamma_i+E_i)$ is dlt, $E_i$ is a regular curve by \cite[Lemma 3.4]{BCZ18}. Then by adjunction we have $K_{E_i}+\Gamma_{E_i}=(K_{Y_i}+E_i+\Gamma_i)|_{E_i}$, and the coefficients of $\Gamma_{E_i}$ are in a DCC set by Proposition \ref{pro:adjunction}. Let $\ell_i=H^0(E_i, \mcO_{E_i})$. Then $\deg_{\ell_i}(K_{E_i}+\Gamma_{E_i})=((K_{Y_i}+E_i+\Gamma_i)\cdot_{k_i} E_i)/[\ell_i:k_i]=0$, where $X_i$ is defined over $k_i$. This implies that $\deg_{\ell_i}K_{E_i}<0$. Thus by Lemma \ref{lem:imperfect-curve} $E_i$ is a conic, and $\deg_{\ell_i}K_{E_i}=-2$. Then by Proposition \ref{pro:trivial-acc-for-curves} the coefficients of $\Gamma_{E_i}$ are contained in a finite set, say $I_0\subset [0, 1]$, since $K_{E_i}+\Gamma_{E_i}\num 0$ as $\deg_{\ell_i}(K_{E_i}+\Gamma_{E_i})=0$. By our construction and Proposition \ref{pro:adjunction}, for each $i\>1$ there is a number $1-\frac{1}{m_i}+t_i\frac{\sum_j n_{ij}b_{ij}}{m_i}$ contained in $I_0$ which is a coefficient of $\Gamma_{E_i}$, where $b_{ij}$'s are the coefficients of $\Gamma_i$ and thus contained in a DCC set. Since $I_0$ is a finite set, we may assume that $1-\frac{1}{m_i}+t_i\frac{\sum_j n_{ij}b_{ij}}{m_i}=b\in I_0$ for all $i$. Then $t_i\sum_j n_{ij}b_{ij}=m_i(b-1)+1$. If $b<1$, then $m_i=m_{i+1}$ for all $i\gg 0$, otherwise the right hand side generates a strictly decreasing sequence while the left hand side satisfies the DCC. If $b=1$, then $t_i\sum_j n_{ij}b_{ij}=1$ for all $i$. In either case we have $t_i\sum_j n_{ij}b_{ij}=b'$ for some constant $b'>0$ and for all $i\>1$. Now as in the proof of the Claim \ref{clm:sum-dcc} we can prove that the number of components of $\Gamma_{E_i}$ is bounded. Then $\sum_j n_{ij}b_{ij}$ satisfies DCC by Lemma \ref{lem:dcc-sets}, part(5). In particular, $\frac{1}{t_i}$ satisfies DCC, hence $t_i$ satisfies ACC, which is a contradiction.

\end{proof}

\begin{theorem}\cite[Proposition 11.7]{Bir16}\label{thm:trivial-acc-for-surfaces}
	Let $k$ be a fixed algebraically closed field of characteristic $p>0$, and $I\subset [0, 1]$ a DCC set. Then there exists a finite subset $I_0\subset I$ with the following properties:\\
	If $(X, \Delta)$ is a log pair such that
	\begin{enumerate}
		\item $X$ is a projective variety of dimension $2$ over $k$,
		\item $(X, \Delta)$ is log canonical,
		\item the coefficients of $\Delta$ belong to $I$, and 
		\item $K_X+\Delta\num 0$,
	\end{enumerate}
	then the coefficients of $\Delta$ belong to $I_0$.\\
\end{theorem}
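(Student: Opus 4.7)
I would prove Theorem \ref{thm:trivial-acc-for-surfaces} by contradiction, reducing via adjunction to the one-dimensional statement already established in Proposition \ref{pro:trivial-acc-for-curves}. Suppose the conclusion fails: after passing to a subsequence, there is a sequence of pairs $(X_i,\Delta_i)$ satisfying the hypotheses and a component $D_{i1}\subseteq\lrd\Delta_i\rrd^c$ whose coefficient $a_{i1}$ in $\Delta_i$ forms a strictly increasing sequence with limit $a\in(0,1]$. Write $\Delta_i=a_{i1}D_{i1}+\Omega_i$, where $D_{i1}$ is not a component of $\Omega_i$.

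First I would create a log canonical center. Set $t_i:=\lct((X_i,\Omega_i);D_{i1})$, so that $(X_i,\Omega_i+tD_{i1})$ is log canonical precisely for $t\in[0,t_i]$. Since $(X_i,\Delta_i)$ is log canonical we have $t_i\geq a_{i1}$, and by Theorem \ref{thm:acc-for-surfaces} the set $\{t_i\}$ satisfies the ACC. After passing to a subsequence we may assume $\{t_i\}$ is monotonically nonincreasing. If $t_i=a_{i1}$ for infinitely many $i$, then $\{a_{i1}\}$ would contain a strictly increasing sequence lying inside the ACC set $\{t_i\}$, a contradiction. So we may assume $t_i>a_{i1}$ for every $i$.

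Next, I would pass to a dlt model and apply adjunction. By Theorem \ref{thm:dlt-model} choose a $\mbQ$-factorial dlt model $f_i:(Y_i,\Gamma_i)\to(X_i,\Omega_i+t_iD_{i1})$. Because $t_i$ is exactly the lct obtained by raising the coefficient of $D_{i1}$ from $a_{i1}$, there exists an $f_i$-exceptional prime divisor $E_i\subset Y_i$ appearing with coefficient $1$ in $\Gamma_i$ whose strict transform meets $\tilde D_{i1}$. The numerical equivalence $K_{X_i}+\Omega_i+t_iD_{i1}\num (t_i-a_{i1})D_{i1}$ pulls back to $K_{Y_i}+\Gamma_i\num (t_i-a_{i1})f_i^*D_{i1}$, and the projection formula yields $(K_{Y_i}+\Gamma_i)\cdot E_i=0$ since $E_i$ is $f_i$-exceptional. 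By adjunction we obtain $K_{E_i}+\Diff_{E_i}\num 0$ on the regular curve $E_i$ (regular by the dlt property and \cite[Lemma 3.4]{BCZ18}). By Proposition \ref{pro:adjunction}, the coefficients of $\Diff_{E_i}$ lie in $D(I\cup\{t_i\})$, which is DCC by Lemma \ref{lem:dcc-sets}(6); Proposition \ref{pro:trivial-acc-for-curves} then forces these coefficients into a finite subset $I_0'\subset[0,1]$. The coefficient of $\Diff_{E_i}$ at a point of $E_i\cap\tilde D_{i1}$ has the form
\[
    \frac{m-1}{m}+\frac{l\cdot t_i+\sum_j l_j d_j}{m}\in I_0',
\]
with $l\geq 1$ (because $E_i$ meets $\tilde D_{i1}$) and the $d_j$ coefficients of $\Gamma_i$ belonging to the DCC set $I$. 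Successive passages to subsequences force $m$, $l$, the integers $l_j$, and each $d_j$ with nonzero weight to stabilize, so that $t_i$ is eventually constant.

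Finally, I would convert this constancy of $t_i$ into constancy of $a_{i1}$. The idea is to rerun the previous construction starting from the pair $(X_i,\Delta_i)$ itself, after perturbing by a general auxiliary divisor so that the extracted lc place $F_i$ encodes $a_{i1}$ linearly in its different (rather than $t_i$). Applying adjunction on such an $F_i$, together with Proposition \ref{pro:trivial-acc-for-curves}, places $a_{i1}$ into a finite subset of $I$, contradicting its strict increase. The main obstacle is precisely this last step: transferring the finiteness obtained for the log canonical threshold $t_i$ back to the original coefficient $a_{i1}$. One must either arrange a second adjunction whose different directly records $a_{i1}$, or perform an MMP on the dlt model contracting exactly the divisors responsible for the gap $t_i-a_{i1}$; either route makes essential use of the global hypothesis $K_{X_i}+\Delta_i\num 0$ beyond its role in producing the numerical equivalence $K_{Y_i}+\Gamma_i\num (t_i-a_{i1})f_i^*D_{i1}$ used in the first adjunction.
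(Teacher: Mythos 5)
The paper offers no argument of its own for this statement---it is imported wholesale from \cite[Proposition 11.7]{Bir16}---so your proposal can only be judged on its own completeness, and it is not complete. The decisive gap is the one you flag yourself at the end: your construction ties the coefficient $a_{i1}$ to the auxiliary threshold $t_i=\lct((X_i,\Omega_i);D_{i1})$ only through the one-sided inequality $t_i\>a_{i1}$ (plus the exclusion of $t_i=a_{i1}$), and no amount of control on $t_i$ can push a strictly increasing sequence $\{a_{i1}\}$ into a finite set through such an inequality; the contradiction hypothesis is never genuinely confronted. Moreover, the intermediate claim that $t_i$ eventually stabilizes is itself unjustified: after fixing $m$, $l$, the $l_j$ and the value $b\in I_0'$ along subsequences you get $l\,t_i+\sum_j l_j d_{ij}=m(b-1)+1$, but the $d_{ij}$ range over the infinite DCC set $I\cup\{1\}$ and can only be arranged to be nondecreasing along a subsequence, not constant; the identity then gives merely that $t_i$ is nonincreasing, and ACC for thresholds forbids strictly increasing, not strictly decreasing, sequences, so neither a contradiction nor constancy follows. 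There is also a smaller foundational point: neither $D_{i1}$ nor $K_{X_i}+\Omega_i$ is known to be $\mbR$-Cartier, so $t_i$ is not literally an element of $\LCT_2(I,\{1\})$ in the sense of Definition \ref{def:acc-for-lct}; you would need to pass first to a $\mbQ$-factorial dlt model of $(X_i,\Delta_i)$ via Theorem \ref{thm:dlt-model}, which preserves hypotheses (2)--(4) and replaces $I$ by $I\cup\{1\}$.

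The missing idea is to make the adjunction record $a_{i1}$ itself rather than an unrelated threshold. The standard route from ACC for thresholds to global ACC (cf.\ \cite{HMX14}, and in the spirit of the cited proof in \cite{Bir16}) is: set $a=\lim a_{i1}$ and $\Delta_i'=\Delta_i+(a-a_{i1})D_{i1}$; Theorem \ref{thm:acc-for-surfaces} forces $(X_i,\Delta_i')$ to be log canonical for $i\gg0$, since otherwise the thresholds of $D_{i1}$ with respect to $(X_i,\Delta_i-a_{i1}D_{i1})$ would lie in $[a_{i1},a)$ and hence contain a strictly increasing sequence; then $K_{X_i}+\Delta_i'\num(a-a_{i1})D_{i1}$ is numerically nontrivial but tends to $0$, and one exploits this by running an MMP (or passing to a Mori fibre space, or extracting an lc place) and restricting, so that the coefficient formula of Proposition \ref{pro:adjunction} contains $a_{i1}$ (or $a$) linearly; Proposition \ref{pro:trivial-acc-for-curves} then yields the contradiction. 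Some version of this step, which you only gesture at, is exactly where the hypothesis $K_{X_i}+\Delta_i\num 0$ must do its real work; without it your argument does not close.
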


\begin{theorem}\label{thm:theta-equality-in-dim-2}
	Fix a DCC set $I\subset \mbR_{\>0}$. Then there exists an $0<\ve<1$ with the following properties: If $(S, \Theta)$ and $(S, \Theta')$ are two $\mbQ$-Gorenstein log pairs of dimension $2$ over some arbitrary fields such that the coefficients of $\Theta$ belong to $I$, and 
	\[
		(1-\ve)\Theta\<\Theta'\<\Theta,
	\]
then $(S, \Theta)$ is log canonical if and only if $(S, \Theta')$ is log canonical. Moreover, if all the surfaces are defined over some fixed algebraically closed field $k=\overline{k}$ and $(S, \Theta')$ is log canonical and $K_S+\Theta'\num 0$, then $\Theta'=\Theta$.	 
\end{theorem}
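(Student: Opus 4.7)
The plan is to mirror the two-part strategy from Theorem \ref{thm:theta-equality-in-dim-1}, replacing each one-dimensional ingredient with its two-dimensional analog. For the first assertion, I would apply Theorem \ref{thm:acc-for-surfaces} with $I$ replaced by $\{0\}$ and $J$ by the given DCC set $I$, producing $\ve\in(0,1)$ such that $\LCT_2(\{0\},I)\cap[1-\ve,1)=\emptyset$. The $\mbQ$-Gorenstein hypothesis ensures $(S,0)$ is itself a log pair, so the lct of $\Theta$ is well-defined. Since $\Theta'\<\Theta$, one direction of the equivalence is trivial. Conversely, if $(S,\Theta')$ is log canonical, then so is $(S,(1-\ve)\Theta)$, giving $\lct((S,0);\Theta)\>1-\ve$; by the choice of $\ve$ this threshold must actually be at least $1$, so $(S,\Theta)$ is log canonical.

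For the second assertion, I would argue by contradiction exactly as in the proof of Theorem \ref{thm:theta-equality-in-dim-1}. Suppose there exist $\ve_i\downarrow 0$ and log canonical pairs $(S_i,\Theta_i)$, $(S_i,\Theta_i')$, all defined over the fixed algebraically closed field $k$, with $K_{S_i}+\Theta_i'\num 0$ and
\[
(1-\ve_i)\Theta_i\<\Theta_i'<\Theta_i.
\]
Write $\Theta_i=\sum_j d_{ij}D_{ij}$ and $\Theta_i'=\sum_j d_{ij}'D_{ij}$. The central step is to prove the two-dimensional analog of Claim \ref{clm:dcc-coefficients}: the coefficients $\{d_{ij}'\}$ lie in a fixed DCC set. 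Assume not; then after relabelling and passing to subsequences, $\{d_{i1}'\}$ is strictly decreasing while $\{d_{i1}\}$ is monotonically increasing, and setting $d=\lim d_{i1}$, $d'=\lim d_{i1}'$ produces the contradiction
\[
d\>d_{i1}\>d_{i1}'>d'\>\lim_i(1-\ve_i)d_{i1}=d.
\]
This argument is dimension-agnostic and carries over verbatim from the one-dimensional case.

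Once the DCC property of $\{d_{ij}'\}$ is established, I would invoke Theorem \ref{thm:trivial-acc-for-surfaces}---this is precisely where the hypothesis that all $S_i$ are defined over a single fixed algebraically closed field is crucial, and the reason the second assertion carries this extra requirement---to conclude that the coefficients of the $\Theta_i'$ actually lie in a \emph{finite} set $I_0\subset[0,1]$. Relabelling so that $(1-\ve_i)d_{i1}\<d_{i1}'<d_{i1}$, and passing to further subsequences so that $\{d_{i1}\}$ is monotonically increasing and $\{d_{i1}'\}$ is constant (possible since $I_0$ is finite), taking limits yields $\lim d_{i1}=d_{11}'$, which contradicts $d_{11}>d_{11}'$.

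The main obstacle is the DCC claim for the coefficients of $\Theta_i'$, since this is what allows Theorem \ref{thm:trivial-acc-for-surfaces} to be applied. Once that claim is in place, both the reduction to a finite set and the final numerical contradiction follow by direct transcription of the one-dimensional argument, and the need for a fixed algebraically closed base field in the second half is entirely dictated by the hypotheses of Theorem \ref{thm:trivial-acc-for-surfaces}.
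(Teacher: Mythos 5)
Your proposal is correct and follows essentially the same route as the paper: the first assertion via the ACC for log canonical thresholds in dimension two (Theorem \ref{thm:acc-for-surfaces}) applied to $\LCT_2(\{0\},I)$, and the second by transcribing the contradiction argument of Theorem \ref{thm:theta-equality-in-dim-1}, with the DCC claim for the coefficients of $\Theta_i'$ and Theorem \ref{thm:trivial-acc-for-surfaces} (hence the fixed algebraically closed field) in place of Proposition \ref{pro:trivial-acc-for-curves}. The only cosmetic difference is that the paper disposes of the case where $(S,0)$ is not log canonical by noting the statement is then vacuous, whereas you fold this into the lct argument; both are fine.
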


\begin{remark}
In the first part we do not fix the base field, i.e., the base field may vary as the surfaces vary.
\end{remark}

\begin{proof}
	First note that if $(S, 0)$ is not log canonical, then the above statement is vacuously true. So assume that $(S, 0)$ is log canonical. Then by Theorem \ref{thm:acc-for-surfaces} there exists an $\ve>0$ such that no elements of the set $\LCT_2(\{0\}, I)$ is contained in the interval $[1-\ve, 1)$. Let $(S, \Theta)$ and $(S, \Theta')$ be two log pairs with coefficients of $\Theta$ in $I$ and $(1-\ve)\Theta\<\Theta'\<\Theta$. Clearly if $(S, \Theta)$ is log canonical, the so is $(S, \Theta')$. So assume that $(S, \Theta')$ is log canonical. Then $(S, (1-\ve)\Theta)$ is log canonical.
	Therefore $\lct((S, 0); \Theta)\>1-\ve$. Since the interval $[1-\ve, 1)$ does not contain any log canonical threshold, $\lct((S, 0); \Theta)\>1$; in particular $(S, \Theta)$ is log canonical.\\
	
	For the second part by contradiction assume that the conclusion is false. Then there is a strictly decreasing sequence $\{\ve_i>0\}$ with $\lim \ve_i=0$ which satisfies the following properties: For each $i\>1$, there are log canonical pairs $(S_i, \Theta_i)$ and $(S_i, \Theta_i')$ such that the coefficients of $\Theta_i$ belong to $I$ and $K_{S_i}+\Theta_i'\num 0$, but
\begin{equation}\label{eqn:theta-equality-in-dim-2}
	(1-\ve_i)\Theta_i\<\Theta_i'<\Theta_i.
\end{equation} 	
We remark that the rest of the proof works exactly as in the proof of Theorem \ref{thm:theta-equality-in-dim-1} by replacing the use of Proposition \ref{pro:trivial-acc-for-curves} by Theorem \ref{thm:trivial-acc-for-surfaces} and noticing the fact that the dimension of the ambient varieties were never used in the proof of Theorem \ref{thm:theta-equality-in-dim-1}.

\end{proof}

\begin{theorem}\label{thm:theta-comparison-in-dim-3}
	Let $k$ be a fixed algebraically closed field of characteristic $p>5$, and $I\subset\mbR_{\>0}$ a DCC set. Then there exists an $0<\ve<1$ with the following properties: If $(X, \Theta)$ and $(X, \Theta')$ are two $\mbQ$-factorial log pairs of dimension $3$ over $k$ such that the coefficients of $\Theta$ belong to $I$, and 
	\[
		(1-\ve)\Theta\<\Theta'\<\Theta,
	\]
then $(X, \Theta)$ is log canonical if and only if $(X, \Theta')$ is log canonical.	
\end{theorem}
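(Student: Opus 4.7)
The plan is to follow the template used in the first halves of Theorem~\ref{thm:theta-equality-in-dim-1} and Theorem~\ref{thm:theta-equality-in-dim-2}, substituting the ACC for log canonical thresholds in dimension three over algebraically closed fields of $\chr p > 5$ (the $n=3$ case of Conjecture~\ref{cnj:acc}, known by \cite{Bir16}) for the analogous ACC in dimensions one and two. Since $X$ is $\mbQ$-factorial, $K_X$ is $\mbQ$-Cartier and so $(X,\Theta)$, $(X,\Theta')$, $(X,(1-\ve)\Theta)$ and $(X,0)$ are all log pairs in the sense of Definition~\ref{def:mmp-singularities}.

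One direction is immediate from the monotonicity of discrepancies in the boundary: if $(X,\Theta)$ is log canonical then, because $0 \leq \Theta' \leq \Theta$, for every divisor $E$ over $X$ we have $a(E;X,\Theta') \geq a(E;X,\Theta) \geq -1$, so $(X,\Theta')$ is log canonical.

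For the converse, I will apply the ACC to the DCC pair of sets $\{0\}$ and $I$: over algebraically closed fields of $\chr p>5$ in dimension three the set $\LCT_3(\{0\}, I)$ satisfies the ACC, so $1$ is not an accumulation point of this set from below. Hence there exists $0 < \ve < 1$, depending only on $I$, such that
\[
    \LCT_3(\{0\}, I) \cap [1-\ve, 1) = \emptyset.
\]
Now suppose $(X,\Theta')$ is log canonical. Because $0 \leq (1-\ve)\Theta \leq \Theta'$, the same discrepancy comparison shows that $(X,(1-\ve)\Theta)$, and hence \emph{a fortiori} $(X,0)$, is log canonical. Thus $(X,0) \in \mfI_3(\{0\})$ and $\lct((X,0);\Theta) \geq 1-\ve$. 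Since the coefficients of $\Theta$ belong to $I$, this threshold value lies in $\LCT_3(\{0\},I)$, so by the choice of $\ve$ it must in fact satisfy $\lct((X,0);\Theta) \geq 1$, i.e., $(X,\Theta)$ is log canonical.

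Essentially the only obstacle is the appeal to the three-dimensional ACC in positive characteristic, but this has already been established and is cited from \cite{Bir16}; beyond invoking it, the argument is a direct three-dimensional analogue of the lower-dimensional proofs. Note that, unlike in Theorems~\ref{thm:theta-equality-in-dim-1} and~\ref{thm:theta-equality-in-dim-2}, no second part asserting $\Theta' = \Theta$ under the additional hypothesis $K_X+\Theta' \num 0$ is being claimed here, consistent with the fact that no three-dimensional analogue of Proposition~\ref{pro:trivial-acc-for-curves} or Theorem~\ref{thm:trivial-acc-for-surfaces} has been established in the paper.
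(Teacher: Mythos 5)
Your proposal is correct and follows essentially the same route as the paper: the paper's proof simply transplants the first part of the argument of Theorem~\ref{thm:theta-equality-in-dim-2}, replacing the two-dimensional ACC by Birkar's ACC for log canonical thresholds on $3$-folds over algebraically closed fields of characteristic $p>5$ \cite[Theorem 1.10]{Bir16}, exactly as you do (your observation that no analogue of the $\Theta'=\Theta$ statement is claimed in dimension three also matches the paper).
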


\begin{proof}
	A similar proof as in the proof of the first part of the Theorem \ref{thm:theta-equality-in-dim-2} works here by noticing the fact that the ACC for log canonical thresholds is known for $3$-folds defined over an algebraically closed field of char $p>5$ due to Birkar, \cite[Theorem 1.10]{Bir16}.\\ 
\end{proof}

\section{Main Theorem}\label{sec:main-theorem}
First we prove the following main technical result.
\begin{theorem}\label{thm:main-technical-result}
Fix an algebraically closed field $k$ of characteristic $p>5$, and a DCC set $I\subset [0, 1]$. Let $\mfD$ be the set of all klt pairs $(X, \Delta)$ satisfying the following properties:
\begin{enumerate}
	\item $X$ is a projective $3$-fold defined over $k$,
	\item the coefficients of $\Delta$ belong to $I$,
	\item $K_X+\Delta\num 0$, and
	\item $\Delta$ is big.
\end{enumerate}
Then there exists a constant $0<\ve<1$ satisfying the following properties:\\
If $\Phi\>0$ is an effective $\mbR$-divisor such that $K_X+\Phi\num 0$ and $\Phi\>(1-\delta)\Delta$ for some $0<\delta<\ve$ and $(X, \Delta)\in\mfD$, then $(X, \Phi)$ has klt singularities.
\end{theorem}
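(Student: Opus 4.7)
The plan is to argue by contradiction and reduce dimension via a $\mbQ$-factorial dlt model, then apply the \emph{moreover} clause of Theorem \ref{thm:theta-equality-in-dim-2}. Suppose no such $\ve$ works; then there exist sequences $(X_i,\Delta_i)\in\mfD$ and effective $\mbR$-divisors $\Phi_i$ with $K_{X_i}+\Phi_i\num 0$, $\Phi_i\>(1-\delta_i)\Delta_i$, $(X_i,\Phi_i)$ not klt, and $\delta_i\to 0^+$. First I would arrange $(X_i,\Phi_i)$ to be log canonical but not klt: if not, Lemma \ref{lem:convex-lct} gives $\lambda_i\in(0,1)$ so that $(X_i,(1-\lambda_i)\Delta_i+\lambda_i\Phi_i)$ is lc but not klt, and this new boundary still satisfies $\>(1-\delta_i)\Delta_i$ and is numerically equivalent to $-K_{X_i}$, so I rename it $\Phi_i$. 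Then Theorem \ref{thm:dlt-model} produces a $\mbQ$-factorial dlt model $f_i\colon Y_i\to X_i$ of $(X_i,\Phi_i)$ with $(Y_i,\Phi_{Y_i})$ dlt and $K_{Y_i}+\Phi_{Y_i}\num 0$; pick a prime $E_i\subseteq\lfloor\Phi_{Y_i}\rfloor$, which is a normal projective surface over the algebraically closed field $k$.

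Next I would construct on $Y_i$ a comparison boundary $\Gamma_i$, effective with coefficients in the DCC set $I\cup\{1\}$, namely the strict transform of $\Delta_i$ together with every $f_i$-exceptional divisor (and $E_i$, should it happen to be non-exceptional) at coefficient $1$. A discrepancy computation using the klt hypothesis on $(X_i,\Delta_i)$ shows $(Y_i,\Gamma_i)$ is log canonical with $\lfloor\Gamma_i\rfloor\supseteq E_i$, and $K_{Y_i}+\Gamma_i\num F_i$ for an effective $f_i$-exceptional divisor $F_i$. From $\Phi_i\>(1-\delta_i)\Delta_i$ on $X_i$ one obtains componentwise $\Phi_{Y_i}\>(1-\delta_i)\Gamma_i$ on $Y_i$: this is immediate on strict transforms, and on exceptional components both sides equal $1$. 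Applying adjunction (Proposition \ref{pro:adjunction} and Lemma \ref{lem:easy-adjunction}) to $E_i$, which is $\mbQ$-Gorenstein by Lemma \ref{lem:q-gorenstein}, yields two lc log pairs $(E_i,\Phi_{E_i})$ and $(E_i,\Gamma_{E_i})$ on the surface $E_i$, with $K_{E_i}+\Phi_{E_i}\num 0$ and, after absorbing the correction $F_i|_{E_i}$ into a suitable modification of $\Gamma_i$, $K_{E_i}+\Gamma_{E_i}\num 0$. The coefficients of $\Gamma_{E_i}$ lie in the DCC set $D(I)$, and the componentwise inequality propagates through the different formula of Proposition \ref{pro:adjunction} to $\Phi_{E_i}\>(1-\delta_i)\Gamma_{E_i}$.

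Finally, the \emph{moreover} clause of Theorem \ref{thm:theta-equality-in-dim-2}, applied with $\Theta=\Gamma_{E_i}$ and $\Theta'=\min(\Phi_{E_i},\Gamma_{E_i})$, forces $\Phi_{E_i}=\Gamma_{E_i}$ for all $i$ with $\delta_i$ below the constant $\ve$ of that theorem. This contradicts the fact that $\Phi_i\neq\Delta_i$ on some prime divisor (otherwise $(X_i,\Phi_i)=(X_i,\Delta_i)$ would be klt), because such a strict mismatch propagates through the coefficient formula of Proposition \ref{pro:adjunction} to a prime of $E_i$ where $\Phi_{E_i}$ and $\Gamma_{E_i}$ strictly differ. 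The main obstacle is the bookkeeping to arrange $K_{E_i}+\Gamma_{E_i}\num 0$ from $K_{Y_i}+\Gamma_i\num F_i$, in particular handling exceptional divisors on which the natural pullback of $\Delta_i$ would carry a negative coefficient; should the surface step prove obstructed, a fallback is to further adjoin to a curve inside $E_i$ and apply Theorem \ref{thm:theta-equality-in-dim-1} in place of Theorem \ref{thm:theta-equality-in-dim-2}.
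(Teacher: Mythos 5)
There is a genuine gap, and it is exactly at the step you yourself flag as ``the main obstacle'': you try to run the contradiction directly on the dlt model $Y_i$, whereas the paper's proof first runs a $(K_{Y}+\Psi-S_1)$-MMP and passes to a Mori fibre space $g\colon W\to Z$ (using \cite[Theorem 1.7]{BW17}), and it is only the resulting (relative) Picard number one geometry that makes the endgame work. Concretely: on $Y_i$ one has $K_{Y_i}+\Gamma_i\num\sum_j(1-a_j)S_j$, an effective divisor supported on $\lrd\Phi_{Y_i}\rrd$, and its restriction to $E_i$ is in general a nonzero (possibly negative on components, if $E_i$ is exceptional) class that cannot be ``absorbed into a suitable modification of $\Gamma_i$'' while keeping effectivity, coefficients in a DCC set, and the comparison $(1-\ve)\Theta\<\Theta'\<\Theta$. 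Your concrete fallback, $\Theta'=\min(\Phi_{E_i},\Gamma_{E_i})$, does not satisfy the hypothesis $K_{E_i}+\Theta'\num 0$ of the \emph{moreover} clause of Theorem \ref{thm:theta-equality-in-dim-2}: one only knows $K_{E_i}+\Phi_{E_i}\num 0$, and $\Phi_{E_i}\<\Gamma_{E_i}$ need not hold since $\Phi_i$ may have components not in $\Supp\Delta_i$ and coefficients larger than those of $\Delta_i$. Moreover, even granting the clause, your final contradiction requires the locus where $\Phi_i\neq\Delta_i$ to be visible on the chosen component $E_i$, but $E_i$ can be disjoint from (the strict transforms of) all components where they differ, so no strict mismatch of $\Phi_{E_i}$ and $\Gamma_{E_i}$ is forced. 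In the paper these three problems are solved simultaneously by the Mori fibre space: there $\rho=1$ (absolutely or on the generic fibre), so $S_1$ and the image of $\Gamma$ are ample, there exists $0<\eta<\ve$ with $K+(1-\eta)\Gamma+\sum S_i\num 0$, and the contradiction is numerical (the ACC-type statements force $(1-\eta)\Gamma+\sum S_i=\Gamma+\sum S_i$, while $K+\Gamma+\sum S_i\num\sum(1-a_i)S_i$ is ample, hence not numerically trivial). Without that reduction none of these ingredients is available.

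Two smaller points. Your assertion that ``a discrepancy computation using the klt hypothesis on $(X_i,\Delta_i)$ shows $(Y_i,\Gamma_i)$ is log canonical'' is not valid as stated: raising the exceptional coefficients to $1$ can a priori destroy log canonicity, and the paper explicitly treats this as nontrivial, deducing it from the comparison theorems (Theorem \ref{thm:theta-comparison-in-dim-3}, resp.\ \ref{thm:theta-equality-in-dim-2}) applied to $(1-\ve)\Gamma+\sum S_i\<\Psi$; your conclusion is recoverable that way, but not by the argument you give. Finally, note that even in the paper's Case III one restricts to a boundary surface $S_1^n$ exactly as you propose, but only \emph{after} the MMP, precisely because the ampleness and the numerical trivialization by some $\eta<\ve$ are what feed Theorems \ref{thm:theta-equality-in-dim-1} and \ref{thm:theta-equality-in-dim-2}; so the missing MMP step is not a shortcut you can omit but the load-bearing part of the proof.
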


\begin{remark}
	The statement above and its proof given below are both based on the proof of Lemma 6.1 in \cite{HMX14}. However, we note that our poof is \emph{significantly} more involved than that of \cite{HMX14}, having to do with the failure of Bertini's theorem for base-point free linear systems and other related results in positive characteristic. In a key argument in the proof of \cite[Lemma 6.1]{HMX14}, the authors reduce the problem to a Mori fiber space $g:Y\to Z$ and then restrict everything to the general fibers of $g$ which reduces the problem to a lower dimension. We face several challenges at this stage in positive characteristic. The first problem is that in positive characteristic the general fibers $F$ of a given fibration may have very bad singularities, they could in fact be \emph{non-reduced} schemes, even if the fibration is a Mori fiber space; this completely destroyes the hope of using general fibers. On top of that, even if we know that the general fibers $F$ of  $g:Y\to Z$ are reduced, irreducible and normal varieties, we still can not guarantee that $F$ has good MMP singularities via adjunction from $Y$; this has to do with the failure of \emph{generic smoothness} for fibrations in positive characteristic. In order to circumvent these issues, we work with the \emph{generic fiber} $Y_\eta$ of $g:Y\to Z$ instead of general fibers $F$. The generic fiber $Y_\eta$ is now a normal integral scheme over the function field $K(Z)$ of $Z$. However, this comes with a new set of challenges, since $Y_\eta$ is defined over $K(Z)$ which is an \emph{imperfect} field, lots of standard results are either not known for varieties over imperfect field or they are known to fail. Fortunately, in recent years lots of progress have been made towards understanding the birational geometry of \emph{surfaces} over imperfect fields, especially the minimal model program, mostly due to Tanaka, see \cite{Tan18}. We are able to use his results to our advantage to prove some ACC-type results for surfaces over arbitrary fields (see Section \ref{sec:acc-for-surfaces-and-3-folds}). In our proof we also have to deal with curves over imperfect fields and we need various ACC-type results on them as well, which are developed in Section \ref{sec:acc-for-curves}.\\
\end{remark}

\begin{proof}[Proof of Theorem \ref{thm:main-technical-result}]
	We fix an $\ve>0$ which is the minimum of the three values of $\ve$ obtained in Theorem \ref{thm:theta-equality-in-dim-1}, \ref{thm:theta-equality-in-dim-2} and \ref{thm:theta-comparison-in-dim-3}. Now by contradiction assume that there is a pair $(X, \Phi)$ such that $\Phi\>(1-\delta)\Delta$ for some $0<\delta<\ve$, $K_X+\Phi\num 0$ and $(X, \Phi)$ is not klt. Note that $\Phi>(1-\delta)\Delta$, since $(X, \Delta)$ is klt, and $\Phi$ is also big, since $\Delta$ is. Now we want to modify $\Phi$ so that $(X, \Phi)$ becomes log canonical but the other properties of $\Phi$ are preserved. If $(X, \Phi)$ is already log canonical, then there is nothing to do. So assume that $(X, \Phi)$ is not log canonical. Then by Lemma \ref{lem:convex-lct} there exists a $\lambda\in (0, 1)$ such that $(X, (1-\lambda)\Delta+\lambda\Phi)$ is log canonical but not klt. Observe that $((1-\lambda)\Delta+\lambda\Phi)>(1-\delta)\Delta$ and $K_X+(1-\lambda)\Delta+\lambda\Phi=(1-\lambda)(K_X+\Delta)+\lambda(K_X+\Phi)\num 0$. Thus by replacing $(1-\lambda)\Delta+\lambda\Phi$ by $\Phi$ we may assume that $(X, \Phi)$ is log canonical but not klt. Let $f:Y\to X$ be a dlt-model of $(X, \Phi)$, whose existence is guaranteed by \cite[Theorem 1.6]{Bir16}. We write
\begin{equation}\label{eqn:dlt-model}
	K_Y+\Psi=f^*(K_X+\Phi),
\end{equation}	
and
\begin{equation}\label{eqn:klt-model}
	K_Y+\Gamma+\sum a_iS_i=f^*(K_X+\Delta),
\end{equation}	
where $\lrd\Psi\rrd=\sum S_i$ and $a_i<1$ for all $i$, and $\Gamma\>0$ is an effective $\mbR$-divisor such that $\Supp\Gamma\subset\Supp(f^{-1}_*\Delta)$ and $\Gamma$ and $\lrd\Psi\rrd$ do not share any common component.\\ 

Since $K_Y+\Phi\num 0$, we have $K_Y+\Psi\num 0$. Thus by \cite[Theorem 1.7]{BW17} running a $(K_Y+\Psi-S_1)$-MMP we end up with a Mori fiber space $g:W\to Z$. Let $\phi:Y\bir W$ be induced birational map. Since $K_Y+\Psi\num 0$, every step of this MMP is $S_1$-positive, in particular, $\phi_*S_1$ is $g$-ample and hence $\phi_*S_1$ is not contracted by $g$. Observe that since $K_X+\Psi\num 0$ and $K_Y+\Gamma+\sum a_iS_i\num 0$ (as $K_X+\Delta\num 0$), these relations are preserved at every step of the $(K_Y+\Psi-S_1)$-MMP and eventually we have $K_W+\phi_*\Psi\num 0$ and $K_W+\phi_*\Gamma+\sum a_i\phi_*S_i\num 0$; this follows from \cite[Theorem 3.7(4)]{KM98} which in our case can be obtained from the cone theorem and base-point free theorem as in \cite[Theorem 1.1 and 1.2]{BW17}. It also follows from \cite[Lemma 3.38]{KM98} that $(W,\phi_*\Psi)$ is log canonical, however, note that it is not necessarily dlt.\\

Next we want to establish an inequality that $\Psi>(1-\ve)\Gamma+\sum S_i$; it will be used heavily in the rest of the proof. To this end we first show that $\Gamma\neq 0$. Indeed, if $\Gamma=0$, then $\Supp(f^{-1}_*\Delta)\subset\Supp\lrd\Psi\rrd$. Since $\Phi>(1-\ve)\Delta$, this implies that every component of $\Delta$ appears in $\Phi$ with coefficient $1$. In particular, $\Phi>\Delta$; but then $\Phi\num\Delta$ implies that $\Phi=\Delta$, which is a contradiction, since $(X, \Delta)$ is klt and $(X, \Phi)$ is not klt. Therefore $\Gamma>0$, and then from the inequality $\Phi>(1-\ve)\Delta$ and \eqref{eqn:dlt-model} and \eqref{eqn:klt-model} it follows that $\Psi>(1-\ve)\Gamma+\sum S_i$.\\

 In the following discussion we separate three cases based on the relative dimension of $g:W\to Z$. However, first we claim that we may assume that $\phi_*\Gamma$ is not contracted by $g:W\to Z$. Indeed, if $\phi_*\Gamma$ is contracted by $g$, then $\phi_*\Gamma\num_g 0$, since $\rho(W/Z)=1$. Again since $\rho(W/Z)=1$, any non-zero effective divisor on $W$ which is not contracted by $g$ is $g$-ample. In particular, from the discussion above it follows that $\sum \phi_*S_i$ is $g$-ample. Then $K_W+\sum\phi_*S_i\num_g K_W+\phi_*\Gamma+\sum \phi_*S_i\num \sum (1-a_i)\phi_*S_i $ is $g$-ample, since $a_i<1$ for all $i$. But then we have  $K_W+\phi_*\Psi\num 0$ and $\lrd\Psi\rrd=\sum S_i$, which is a contradiction. Therefore we may assume that $\phi_*\Gamma$ is not contracted by $g:W\to Z$ in the following discussion.\\

\textbf{Case I:} Relative dimension of $g:W\to Z$ is $1$. Let $F$ be the \emph{generic} fiber of $g$. Then Replacing $Y, \Gamma$ and $\Psi$ by $F$ and the restriction of $\phi_*\Gamma$ and $\phi_*\Psi$ to $F$, may assume that $Y$ is a regular curve over an imperfect field $\ell=K(Z)$, where $K(Z)$ is the function field of $Z$ (see Lemma \ref{lem:integral-generic-fiber} and \ref{lem:singularities-generic-fiber}). Further notice that $S_1$ and $\Gamma$ are non-zero effective divisors on $Y$, and $H^0(Y, \mcO_Y)=\ell$, since $g_*\mcO_W=\mcO_Z$, and $\deg_\ell K_Y<0$, since $K_Y+\Psi\num 0$ and $\Psi>0$. Thus by Lemma \ref{lem:imperfect-curve} $Y$ is a conic over $\ell$ and $\deg_\ell K_Y=-2$. Now $K_Y+\Gamma+\sum a_iS_i\num 0$ and $a_i<1$ for all $i$, so $\deg_\ell (K_Y+\Gamma+\sum S_i)>0$. By construction we also have $\Psi>(1-\ve)\Gamma+\sum S_i$. Thus for some $0<\eta<\ve$ we get that $\deg_\ell(K_Y+(1-\eta)\Gamma+\sum S_i)=\deg_\ell(K_Y+\Psi)=0$. But then we have
\begin{equation}\label{eqn:adjunction-in-relative-dim-1}
	(1-\ve)\left(\Gamma+\sum S_i\right)\<\left((1-\ve)\Gamma+\sum S_i\right)\<\left((1-\eta)\Gamma+\sum S_i\right)\<\left(\Gamma+\sum S_i\right).
\end{equation}
Since $(Y, \Psi)$ is log canonical by Lemma \ref{lem:singularities-generic-fiber} and $\Psi>(1-\ve)\Gamma+\sum S_i$, $(Y, (1-\ve)\Gamma+\sum S_i)$ is also log canonical. Then from \eqref{eqn:adjunction-in-relative-dim-1} and Theorem \ref{thm:theta-equality-in-dim-1} we get that $(Y, \Gamma+\sum S_i)$ is log canonical. In particular, then $(Y, (1-\eta)\Gamma+\sum S_i)$ is log canonical. Now observe that the coefficients of $\Gamma+\sum S_i$ are contained in $I\cup\{1\}$, which is a DCC set, hence by Theorem \ref{thm:theta-equality-in-dim-1}, $(1-\eta)\Gamma+\sum S_i=\Gamma+\sum S_i$, which is a contradiction, since $\deg_\ell((1-\eta)\Gamma+\sum S_i)=0$ and $\deg_\ell(K_Y+\Gamma+\sum S_i)>0$.\\

\textbf{Case II:} Relative dimension of $g:W\to Z$ is $2$. Let $F$ be the \emph{generic} fiber of $g$. Then replacing $Y, \Gamma$ and $\Psi$ by $F$ and the restriction of $\phi_*\Gamma$ and $\phi_*\Psi$ to $F$, we may assume that $Y$ is a normal $\mbQ$-factorial surface over an imperfect field $\ell=K(Z)$, where $K(Z)$ is the function field of $Z$ (see Lemma \ref{lem:integral-generic-fiber} and \ref{lem:singularities-generic-fiber}). We note that $S_1$ is an ample divisor, $\Gamma$ is a non-zero effective divisor (hence ample), and $H^0(Y, \mcO_Y)=\ell$ and the Picard number $\rho(Y)=1$ (see \cite[Lemma 6.6]{Tan15f}). We also note that $(Y, \Psi)$ is a log canonical pair by Lemma \ref{lem:singularities-generic-fiber}.\\

Since $Y$ is a surface let's rename $S_i$'s by $C_i$. Then $C_1$ is ample on $Y$. Furthermore, since $\rho(Y)=1$, any non-zero effective divisor is ample; in particular $\sum C_i$ is ample. Thus $K_Y+\Gamma+\sum C_i\num \sum (1-a_i)C_i$ is ample, since $a_i<1$ for all $i$. Since $\Psi>(1-\ve)\Gamma+\sum C_i$, $K_Y+\Psi\num 0$ and $\rho(Y)=1$, there exists an $0<\eta<\ve$ such that $K_Y+(1-\eta)\Gamma+\sum C_i\num 0$. We also note that $(Y, (1-\ve)\Gamma+\sum C_i)$ is log canonical, since $(1-\ve)\Gamma+\sum C_i<\Psi$.
 Let $C^n_1\to C_1$ be the normalization morphism. We have the following adjunction equations
 \begin{subequations}\label{eqn:dim-2-adjunction}
\begin{align}
	\left(K_Y+(1-\ve)\Gamma+\sum C_i\right)|_{C_1^n}& =K_{C^n_1}+\Theta_1,\\
	\left(K_Y+(1-\eta)\Gamma+\sum C_i\right)|_{C^n_1}& =K_{C^n_1}+\Theta_2,\quad\mbox{ and }\\
	\left(K_Y+\Gamma+\sum C_i\right)|_{C^n_1}& =K_{C^n_1}+\Theta.
\end{align}
\end{subequations}
Note that a priori it is not clear whether the coefficients of $\Theta$ are in the DCC set $D(I\cup\{1\})$, since we do not know wether $(Y, \Gamma+\sum C_i)$ is log canonical or not. In particular, some of the coefficients of $\Theta$ could potentially be larger than $1$. However, we claim that $(Y, \Gamma+\sum C_i)$ is indeed log canonical, and thus by Proposition \ref{pro:adjunction} the coefficients of $\Theta$ are in the DCC set $D(I\cup\{I\})$. The proof goes as follows: we have the following inequalities 
\[
	(1-\ve)\left(\Gamma+\sum C_i\right)\<(1-\ve)\Gamma+\sum C_i\<\Gamma+\sum C_i.
\]
Since $(Y, (1-\ve)\Gamma+\sum C_i)$ is log canonical, by Theorem \ref{thm:theta-equality-in-dim-2}, $(Y, \Gamma+\sum C_i)$ is log canonical.\\

Now from \eqref{eqn:dim-2-adjunction} we get that
\[
	(1-\ve)\Theta\<\Theta_1\<\Theta_2\<\Theta,
\]
where the first inequality follows from the fact that the coefficients of $\Theta$ belong to $D(I\cup\{1\})$ and the following inequality
\begin{equation}\label{eqn:dcc-comparison}
	t\left(\frac{m-1+f}{m}\right)\<\left(\frac{m-1+tf}{m}\right)\quad \mbox{ for any } t\<1.
\end{equation}
Now by adjunction (see Lemma \ref{lem:easy-adjunction}) $(C^n_1, \Theta_1)$ is log canonical. Since the coefficients of $\Theta$ are in a DCC set, by Theorem \ref{thm:theta-equality-in-dim-1} $(C^n_1, \Theta)$ is log canonical; in particular $(C^n_1, \Theta_2)$ is log canonical; hence again by Theorem \ref{thm:theta-equality-in-dim-1}, $\Theta_2=\Theta$, since $K_{C^n_1}+\Theta_2\num 0$. But this is a contradiction, since $K_{C^n_1}+\Theta$ is ample, as it is the pullback of an ample divisor by a finite morphism.\\

	\textbf{Case III:} Relative dimension of $g:W\to Z$ is $3$, i.e., $\dim Z=0$. Then replacing $Y, \Gamma$ and $\Psi$ by $W, \phi_*\Gamma$ and $\phi_*\Psi$ we may assume that $Y$ is a projective $3$-fold over the algebraically closed base field $k$, Picard number $\rho(X)=1$ and $S_1$ is an ample divisor on $Y$ and $\Gamma$ is a non-zero effective divisor (hence ample).\\

	Now since $\rho(Y)=1$, every non-zero effective divisor is ample. In particular, $K_Y+\Gamma+\sum S_i\num \sum (1-a_i)S_i$ is ample, since $a_i<1$ for all $i$. Also, since $K_Y+\Psi\num 0, \Psi>(1-\ve)\Gamma+\sum S_i$ and $\rho(X)=1$, it follows that there exists $0<\eta<\ve$ such that $K_Y+(1-\eta)\Gamma+\sum S_i\num 0$. We note that $(Y, (1-\ve)\Gamma+\sum S_i)$ is log canonical, since $(1-\ve)\Gamma+\sum S_i\<\Psi$. Let $S_1^n\to S_1$ be the normalization morphism. We have the following adjunction equations
	\begin{align*}
		\left(K_Y+(1-\ve)\Gamma+\sum S_i\right)|_{S_1^n}& =K_{S_1^n}+\Theta_1,\\
		\left(K_Y+(1-\eta)\Gamma+\sum S_i\right)|_{S_1^n}& =K_{S_1^n}+\Theta_2,\quad\mbox{ and }\\
		\left(K_Y+\Gamma+\sum S_i\right)|_{S_1^n}& =K_{S_1^n}+\Theta.
	\end{align*}
	As in the proof of Case II using Theorem \ref{thm:theta-comparison-in-dim-3} we see that $(Y, \Gamma+\sum S_i)$ is log canonical; in particular the coefficients of $\Theta$ are in the DCC set $D(I\cup\{1\})$ by Proposition \ref{pro:adjunction}. The rest of the arguments are verbatim to the Case II via Lemma \ref{lem:q-gorenstein} and Theorem \ref{thm:theta-equality-in-dim-2}.\\

\end{proof}

\begin{proof}[Proof of Theorem \ref{thm:main-theorem}]
	Let $\ve>0$ be a constant given by Theorem \ref{thm:main-technical-result}. We claim that
	\[
		\vol_X(-K_X)=\vol_X(\Delta)\<\frac{27}{\ve^3}\quad \mbox{ for all } (X, \Delta)\in\mfD.
	\]
By contradiction assume that there is a $(X, \Delta)\in\mfD$ such that $\vol_X(X, \ve\Delta)>3^3$. Since the volume is a continuous function (see \cite[Theorem 2.2.44]{Laz04a}), we have
\[
	\vol_X(\eta\Delta)>3^3\quad \mbox{ for some } 0<\eta<\ve.
\]	 
Note that $(X, (1-\eta)\Delta)$ klt, since $(X, \Delta)$ is klt. Let $x\in X$ be a smooth closed point. Then by Lemma \ref{lem:klt-perturbation} there exists an effective $\mbR$-divisor $\Pi$ passing through $x$ such that $\Pi\sim_\mbR\eta\Delta$ and $(X, (1-\eta)\Delta+\Pi)$ is not klt. This is a contradiction to Theorem \ref{thm:main-technical-result}, since $0<\eta<\ve$.\\

\end{proof}



\end{document}